\newtheorem{theorem}{Theorem}[section]
\newtheorem{lemma}[theorem]{Lemma}
\newtheorem{definition}{Definition}[section]
\newtheorem{remark}[theorem]{Remark}
\newtheorem{assumption}[theorem]{Assumption}
\numberwithin{equation}{section}
\newcommand{\st}{\textnormal{s.t.}}
\newcommand{\RR}{\mathbf R}
\newcommand{\argmin}{\mathop{\rm argmin}}
\newcommand{\dist}{\mathop{\rm dist}}
\newcommand{\LCal}{\mathcal{L}}
\newcommand{\be}{\begin{equation}}
\newcommand{\ee}{\end{equation}}
\newcommand{\ba}{\begin{array}}
\newcommand{\ea}{\end{array}}
\newcommand{\bpm}{\begin{pmatrix}}
\newcommand{\epm}{\end{pmatrix}}
\newcommand{\XCal}{\mathcal{X}}
\newcommand{\etal}{{et al. }}
\begin{document}

\title{Iteration Complexity Analysis of Multi-Block ADMM for a Family of
Convex Minimization without Strong Convexity}

\author{Tianyi Lin\footnotemark[1]  \and Shiqian Ma\footnotemark[1] \and Shuzhong Zhang\footnotemark[2]}
\renewcommand{\thefootnote}{\fnsymbol{footnote}}
\footnotetext[1]{Department of Systems Engineering and Engineering Management, The Chinese University of Hong Kong, Shatin, New Territories, Hong Kong, China.}
\footnotetext[2]{Department of Industrial and Systems Engineering, University of Minnesota, Minneapolis, MN 55455, USA.}

\date{May 6, 2015}

\maketitle

\begin{abstract}

The alternating direction method of multipliers (ADMM) is widely used in solving structured convex optimization problems due to its superior practical performance.
On the theoretical side however, a counterexample was shown in \cite{Chen-admm-failure-2013} indicating that the multi-block ADMM for minimizing the sum of $N$ $(N\geq 3)$ convex functions with $N$ block variables linked by linear constraints may diverge. It is therefore of great interest to investigate further sufficient conditions on the input side which can guarantee convergence for the multi-block ADMM.
The existing results typically
require the strong convexity on parts of the objective.
In this paper, we present convergence and convergence rate results for the multi-block ADMM applied to solve certain $N$-block $(N\geq 3)$ convex minimization problems {\it without requiring strong convexity}.
Specifically, we prove the following two results: (1) the multi-block ADMM returns an $\epsilon$-optimal solution within $O(1/\epsilon^2)$ iterations by solving an associated perturbation to the original problem; (2) the multi-block ADMM returns an $\epsilon$-optimal solution within $O(1/\epsilon)$ iterations when it is applied to solve a certain {\it sharing problem}, under the condition that the augmented Lagrangian function satisfies the Kurdyka-{\L}ojasiewicz property, which essentially covers
most convex optimization models except for some pathological cases.

\vspace{0.8cm}

\noindent {Keywords: Alternating Direction Method of Multipliers (ADMM), Convergence Rate, Regularization, Kurdyka-{\L}ojasiewicz property, Convex Optimization}

%\vspace{0.5cm}

%\noindent {\bf Mathematics Subject Classification 2010:}

\end{abstract}

%\newpage

\section{Introduction}
We consider %solving
the following multi-block convex minimization problem:
\be\label{prob:N}\ba{ll} \min & f_1(x_1) + f_2(x_2) + \cdots + f_N(x_N) \\
                         \st  & A_1 x_1  + A_2 x_2 + \cdots + A_N x_N = b \\
                              & x_i \in \mathcal{X}_i, \, i = 1,\ldots, N, \ea \ee
where $A_i \in \RR^{p\times n_i}$, $b\in\RR^p$, $\XCal_i\subset \RR^{n_i}$ are closed convex sets, and $f_i:\RR^{n_i}\rightarrow\RR$ are closed convex functions.
One effective way to solve \eqref{prob:N}, whenever applicable,  %, when the functions $f_i$'s are of special structures,
is the so-called Alternating Direction Method of Multipliers (ADMM).
The ADMM is closely related to the Douglas-Rachford \cite{Douglas-Rachford-56} and Peaceman-Rachford \cite{Peaceman-Rachford-55} operator splitting methods that date back to 1950s. These operator splitting methods were further studied later in \cite{Lions-Mercier-79,Fortin-Glowinski-1983,Glowinski-LeTallec-89,Eckstein-thesis-89}.
The ADMM has been revisited recently due to its success in solving problems with special structures arising from compressed sensing, machine learning, image processing, and so on; see the recent survey papers \cite{Boyd-etal-ADM-survey-2011,Eckstein-tutorial-admm} for more information.

The ADMM %for solving \eqref{prob:N}
is constructed under an augmented Lagrangian framework, where the augmented Lagrangian function for
\eqref{prob:N} is defined as
\[ \LCal_\gamma(x_1,\ldots,x_N;\lambda) := \sum_{j=1}^N f_j(x_j) - \left\langle \lambda, \sum_{j=1}^N A_j x_j -b\right\rangle + \frac{\gamma}{2}\left\|\sum_{j=1}^N A_j x_j -b\right\|^2,\]
where $\lambda$ is the Lagrange multiplier and $\gamma > 0$ is a penalty parameter.
In a typical iteration of the ADMM for solving \eqref{prob:N}, the following updating procedure is implemented:
\be\label{admm-N}
\left\{\ba{lcl} x_1^{k+1} & := & \argmin_{x_1\in \XCal_1} \ \LCal_\gamma(x_1,x_2^k,\ldots,x_N^k;\lambda^k) \\
                x_2^{k+1} & := & \argmin_{x_2\in \XCal_2} \ \LCal_\gamma(x_1^{k+1},x_2,x_3^k,\ldots,x_N^k;\lambda^k) \\
                          & \vdots & \\
                x_N^{k+1} & := & \argmin_{x_N\in \XCal_N} \ \LCal_\gamma(x_1^{k+1},x_2^{k+1},\ldots,x_{N-1}^{k+1},x_N;\lambda^k) \\
                \lambda^{k+1} & := & \lambda^k - \gamma \left(\sum_{j=1}^N A_j x_j^{k+1} -b\right).           \ea\right. \ee

Note that the ADMM \eqref{admm-N} minimizes in each iteration the augmented Lagrangian function with respect to $x_1,\ldots,x_N$ alternatingly in a Gauss-Seidel manner.
The ADMM \eqref{admm-N} for solving two-block convex minimization problems (i.e., $N=2$) has been studied extensively in the literature.
The global convergence of ADMM \eqref{admm-N} when $N=2$ has been shown in \cite{Gabay-83,Eckstein-Bertsekas-1992}. There are also some recent works that study the convergence rate properties of ADMM when $N=2$ (see, e.g., \cite{He-Yuan-rate-ADM-2012,Monteiro-Svaiter-2010a,Deng-Yin-2012,Boley-2012,He-Yuan-nonergodic-2012}).

However, the convergence of multi-block ADMM \eqref{admm-N} (we call \eqref{admm-N} {\em multi-block ADMM} when $N\geq 3$) has remained unclear for a long time. Recently, Chen \etal\cite{Chen-admm-failure-2013} constructed a counterexample to show the failure of ADMM \eqref{admm-N} when $N\geq 3$.
Notwithstanding its theoretical convergence assurance, the multi-block ADMM \eqref{admm-N} has been applied very successfully to solve problems with $N\, (N\geq 3)$ block variables; for example, see \cite{Tao-Yuan-SPCP-2011,Wright-RASL-TPAMI}. It is thus of great interest to further study sufficient conditions that can guarantee the convergence of multi-block ADMM. Some recent works on studying the sufficient conditions guaranteeing the convergence of multi-block ADMM are described briefly as follows. Han and Yuan \cite{Han-Yuan-note-2012} showed that the multi-ADMM \eqref{admm-N} converges if all the functions $f_1,\ldots,f_N$ are strongly convex and $\gamma$ is restricted to certain region. This condition is relaxed in \cite{Chen-Shen-You-convergence-2013, Lin-Ma-Zhang-convergence-2014} to allow only $N-1$ functions to be strongly convex and $\gamma$ is restricted to certain region. Especially, Lin, Ma and Zhang \cite{Lin-Ma-Zhang-convergence-2014} proved the sublinear convergence rate under such conditions. Closely related to \cite{Chen-Shen-You-convergence-2013, Lin-Ma-Zhang-convergence-2014}, Cai, Han and Yuan \cite{Cai-Han-Yuan-direct-2014} and Li, Sun and Toh \cite{Li-Sun-Toh-convergent-2015} proved that for $N=3$, convergence of multi-block ADMM can be guaranteed under the assumption that only one function among $f_1$, $f_2$ and $f_3$ is required to be strongly convex, and $\gamma$ is restricted in certain region.
%In a very recent work \cite{Li-Sun-Toh-convergent-2015}, the authors study a variant of ADMM \eqref{admm-N} using a relation factor $\alpha\in(0,\frac{1+\sqrt{5}}{2})$ in the update of dual variable, and certain proximal terms $P_i\succ 0$ to regularize the $x_i$-subproblems. They show that this variant of ADMM \eqref{admm-N} converges with only one strongly convex objective function, and the penalty parameter $\gamma$ can be chosen {\color{blue} to take any value} in $(0,\infty)$. However, if the factor $\alpha$ is chosen to be close to $\frac{1+\sqrt{5}}{2}$, {\color{blue} then} $\gamma$ should be very close to $0$ and the smallest eigenvalue of $P_3$ must be sufficiently large. {\color{blue} This potentially slows down the convergence of the algorithm.} Moreover, \cite{Cai-Han-Yuan-direct-2014,Li-Sun-Toh-convergent-2015} focus on 3-block ADMM, and their extension to multi-block ADMM \eqref{admm-N} requires $N-2$ objective functions to be strongly convex.
In addition to strong convexity of $f_2,\ldots,f_N$, by assuming further conditions on the smoothness of the functions and some rank conditions on the matrices in the linear constraints, Lin, Ma and Zhang \cite{Lin-Ma-Zhang-2014-linear} proved the globally linear convergence of multi-block ADMM. Note that the above mentioned works all require that (parts of) the objective function is strongly convex.
Without assuming strong convexity, Hong and Luo \cite{Luo-ADMM-2012} studied a variant of ADMM \eqref{admm-N} with small stepsize in updating the Lagrangian multiplier. Specifically, \cite{Luo-ADMM-2012} proposes to replace the last equation in \eqref{admm-N} to
\[
\lambda^{k+1} := \lambda^k - \alpha\gamma \left(\sum_{j=1}^N A_j x_j^{k+1} -b\right),
\]
where $\alpha>0$ is a small step size. Linear convergence of this variant is proven under the assumption that the objective function satisfies certain error bound conditions. However, it is noted that the selection of $\alpha$ is in fact bounded by some parameters associated with the error bound conditions to guarantee the convergence. Therefore, it might be difficult to choose $\alpha$ in practice. There are also studies on the convergence and convergence rate of some other variants of ADMM \eqref{admm-N}, and we refer the interested readers to \cite{He-Tao-Yuan-2012,He-Tao-Yuan-MOR-2013,He-Hou-Yuan-Jacob-2013,Deng-admm-2014,Sun-Toh-Yang-4admm-2014,Hong-etal-2014-BSUMM,Wang-etal-2013-multi-2} for the details of these variants. However, it is observed by many researchers that modified versions of ADMM though with convergence guarantee, often perform slower than the multi-block ADMM with no convergent guarantee (see \cite{Sun-Toh-Yang-4admm-2014}). Therefore, in this paper, we focus on studying the sufficient conditions that guarantee the convergence of the direct extension of ADMM, i.e., the multi-block ADMM \eqref{admm-N} and studying its convergence rate.

{\bf Our contribution.} The main contribution in this paper lies in the following. First, we show that the ADMM \eqref{admm-N} when $N\geq 3$ returns an $\epsilon$-optimal solution within $O(1/\epsilon^2)$ iterations, with the condition that $\gamma$ depends on $\epsilon$. Here we do not assume strong convexity of any objective function $f_i$. It should be pointed out that our result does not contradict the counterexample proposed in \cite{Chen-admm-failure-2013} since we apply the ADMM \eqref{admm-N} to an associated perturbed problem of \eqref{prob:N} rather than \eqref{prob:N} itself. Secondly, we show that the ADMM \eqref{admm-N} when $N\geq 3$ returns an $\epsilon$-optimal solution within $O(1/\epsilon)$ iterations under the condition that the augmented Lagrangian $\LCal_\gamma$ is a Kurdyka-{\L}ojasiewicz (KL) function \cite{Bolte-characterizations-2010, Bolte-Sabach-Teboulle-2014}, $\nabla f_N$ is Lipschitz continuous, $A_N=I$, and $\gamma$ is sufficiently large. To the best of our knowledge, the convergence rate results given in this paper are the first sublinear convergence rate results for the unmodified multi-block ADMM without assuming any strong convexity of the objective function (note that although without assuming strong convexity, \cite{Luo-ADMM-2012} studies a variant of the multi-block ADMM). In this sense, the results presented in this paper complement with the existing results in the literature.

{\bf Organization.} The rest of this paper is organized as follows. In Section \ref{sec:pre} we provide some preliminaries for our convergence rate analysis. In Section \ref{sec:scenario-1}, we prove the $O(1/\epsilon^2)$ iteration complexity of ADMM \eqref{admm-N} by introducing an associated problem of \eqref{prob:N}. In Section \ref{sec:scenario-2}, we prove the $O(1/\epsilon)$ iteration complexity of ADMM \eqref{admm-N} with Kurdyka-{\L}ojasiewicz (KL) property.
%Section \ref{sec:conclusion} draws some conclusions and points out some future directions.
%{\color{red} (SZ: I commented out the conclusion section as it does not contain much information. But we can have it back if we have more things to say in the future.)}

\section{Preliminaries}\label{sec:pre}
We denote $\Omega = \XCal_1\times \ldots \times \XCal_N \times \RR^p$ and the optimal set of \eqref{prob:N} as $\Omega^*$, and the following assumption is made throughout this paper.

\begin{assumption}\label{assumption-1} The optimal set $\Omega^*$ for problem \eqref{prob:N} is non-empty.\end{assumption}
According to the first-order optimality conditions for \eqref{prob:N}, solving \eqref{prob:N} is equivalent to finding $$(x_1^*,x_2^*,\ldots,x_N^*,\lambda^*)\in\Omega^*$$
such that the following holds:
\begin{equation}\label{kkt}
%\begin{aligned}
\left\{
\begin{array}{l}
(x_i-x_i^*)^\top (g_i(x_i^*)-A_i^\top\lambda^*) \geq 0, \quad \forall x_i\in\XCal_i,\\
A_{1}x_{1}^{*} + \cdots +A_{N}x_{N}^{*}-b=0,
%\end{aligned}
\end{array}
\right.
\end{equation}
for $i=1,2,\ldots,N$.

In this paper, we analyze the iteration complexity of ADMM \eqref{admm-N} under two scenarios. The conditions of the two scenarios are listed in Table \ref{tab:N-scenarios}. The following assumption is only used in Scenario 2.
\begin{assumption}\label{assumption-2} We assume that $\XCal_N=\RR^{n_N}$. We also assume that $f_i$ has a finite lower bound, i.e., $\inf_{x_i\in\XCal_i} f_i(x_i)\geq f_i^*>-\infty$ for $i=1,2,\ldots,N$. Moreover, it is assumed that $f_i+\textbf{1}_{\XCal_i}$ is a coercive function for $i=1,2,\ldots,N-1$, where $\textbf{1}_{\XCal_i}$ denotes the indicator function of $\XCal_i$, i.e.,
\[\textbf{1}_{\XCal_i}(x_i) = \left\{\ba{ll} 0, & \mbox{ if } x_i\in \XCal_i \\ +\infty, & \mbox{ otherwise. }\ea\right.\]
Furthermore, we assume that $\LCal_\gamma$ is a KL function (will be defined later). \end{assumption}

\begin{table}[htdp]
\begin{tabular}{c|c|c|c|c}\hline
Scenario & Lipschitz Continuous & Matrices & Additional Assumption & Iteration Complexity \\\hline\hline
1 & --- & --- & $\frac{\epsilon}{2}\leq\gamma\leq\epsilon$ & $O(1/\epsilon^2)$ \\ \hline
2 & $\nabla f_N$ & $A_N=I$ & $\gamma>\sqrt{2}L$ and Assumption \ref{assumption-2} &  $O(1/\epsilon)$ \\ \hline
\end{tabular}
\caption{Two Scenarios Leading to Sublinear Convergence}\label{tab:N-scenarios}
\end{table}

\begin{remark}
Some remarks are in order here regarding the conditions in Scenario 2. Note that it is not very restrictive to require $f_i+\textbf{1}_{\XCal_i}$ to be a coercive function. In fact, many functions used as regularization terms including $\ell_1$-norm, $\ell_2$-norm, $\ell_\infty$-norm for vectors and nuclear norm for matrices are all coercive functions; assuming the compactness of $\XCal_i$ also leads to the coerciveness of $f_i+\textbf{1}_{\XCal_i}$. Moreover, the assumptions $A_N=I$ and $\nabla f_N$ is Lipschitz continuous actually cover many interesting applications in practice. For example, many problems arising from machine learning, statistics, image processing and so on always have the following structure:
\be\label{prob:fN-special}
\min \ f_1(x_1) + \cdots + f_{N-1}(x_{N-1}) + f_N(b-A_1x_1- \cdots - A_{N-1}x_{N-1}),
\ee
where $f_N$ denotes a loss function on data fitting, which is usually a smooth function, and $f_1, \ldots, f_{N-1}$ are regularization terms to promote certain structures of the solution. This problem is usually referred as {\it sharing problem} (see, e.g., \cite{Boyd-etal-ADM-survey-2011,Hong-nonconvex-admm-2014}). \eqref{prob:fN-special} can be reformulated as
\be\label{prob:fN-special-admm}
\ba{ll}
\min & f_1(x_1) + \cdots + f_{N-1}(x_{N-1}) + f_N(x_N) \\
\st  & A_1x_1+ \cdots + A_{N-1}x_{N-1} + x_N = b,
\ea
\ee
which is in the form of \eqref{prob:N} and can be solved by ADMM (see \cite{Boyd-etal-ADM-survey-2011,Hong-nonconvex-admm-2014}). Note that $A_N=I$ in \eqref{prob:fN-special-admm} and it is very natural to assume that $\nabla f_N$ is Lipschitz continuous. Thus the conditions in Scenario 2 are satisfied.
\end{remark}

{\bf Notations. }
For simplicity, we use the following notation to denote the stacked vectors or tuples:
\[ u = \left(\begin{array}{c} x_{1} \\ \vdots \\ x_{N} \end{array} \right),
u^k = \left(\begin{array}{c} x_{1}^k \\ \vdots \\ x_{N}^k \end{array} \right),
u^* = \left(\begin{array}{c} x_{1}^* \\ \vdots \\ x_{N}^* \end{array} \right),\\
w = \left(\begin{array}{c} u \\ \lambda \end{array} \right),
w^k = \left(\begin{array}{c} u^k \\ \lambda^k \end{array} \right),
w^* = \left(\begin{array}{c} u^* \\ \lambda^* \end{array} \right).\]
We denote by $f(u)\equiv f_1(x_{1})+\cdots+f_{N}(x_{N})$ the objective function of problem \eqref{prob:N}; $\mathbf{1}_{\XCal}$ is the indicator function of $\XCal$; $\nabla f$ is the gradient of $f$; $\|x\|$ denotes the Euclidean norm of $x$.

In our analysis, the following two well-known identities are used frequently,
\begin{eqnarray}
 (w_1-w_2)^{\top}(w_3-w_4) &=& \frac{1}{2}\left(\|w_1-w_4\|^{2}-\|w_1-w_3\|^{2}\right)+\frac{1}{2}\left(\|w_3-w_2\|^{2}-\|w_4-w_2\|^{2}\right), \label{identity-4} \\
 (w_{1}-w_{2})^\top(w_{3}-w_{1}) &=& \frac{1}{2}\left(\|w_{2}-w_{3}\|^{2}-\|w_{1}-w_{2}\|^{2}-\|w_{1}-w_{3}\|^{2}\right). \label{identity-3}
\end{eqnarray}

\section{Iteration Complexity of ADMM: Associated Perturbation}\label{sec:scenario-1}
In this section, we prove the $O(1/\epsilon^2)$ iteration complexity of ADMM  \eqref{admm-N} under the conditions in Scenario 1 of Table \ref{tab:N-scenarios}. Indeed, given $\epsilon>0$ sufficiently small and initial point $u^0$, we introduce an associated perturbed problem of \eqref{prob:N}, i.e.,
\be\label{prob:N-Perturb}\ba{ll} \min & f_1(x_1) + \tilde{f}_2(x_2) + \cdots + \tilde{f}_N(x_N) \\
                         \st  & A_1 x_1 + A_2 x_2 + \cdots + A_N x_N = b \\
                              & x_i\in\XCal_i, i=1,\ldots,N, \ea \ee
where $\tilde{f}_i(x_i) = f_i(x_i) + \frac{\mu}{2}\left\|A_i x_i - A_i x_i^0\right\|^2$ for $i=2,\ldots,N$, and
$\mu = \epsilon(N-2)(N+1)$.
Note $\tilde{f}_i$ are not necessarily strongly convex. We prove that the ADMM \eqref{admm-N} for associated perturbed problem \eqref{prob:N-Perturb} returns an $\epsilon$-optimal solution of the original problem \eqref{prob:N}, in terms of both objective value and constraint violation, within $O(1/\epsilon^2)$ iterations.

The ADMM for solving \eqref{prob:N-Perturb} can be summarized as (note that some constant terms in the subproblems are discarded):
\begin{eqnarray}
x_{1}^{k+1} &:=& \argmin_{x_{1} \in \XCal_{1}}f_{1}(x_{1})+\frac{\gamma}{2}\left\|A_{1}x_{1} + \sum\limits_{j=2}^N A_{j}x_{j}^{k}-b-\frac{1}{\gamma}\lambda^{k}\right\|^{2}, \label{scenario-1-update-x-1}\\
x_{i}^{k+1} &:=& \argmin_{x_{i} \in \XCal_{i}}\tilde{f}_{i}(x_{i}) +\frac{\gamma}{2}\left\| \sum\limits_{j=1}^{i-1} A_{j}x_{j}^{k+1} + A_{i}x_{i} + \sum\limits_{j=i+1}^{N} A_{j}x_{j}^{k}-b-\frac{1}{\gamma}\lambda^{k}\right\|^{2}, \ i = 2,\ldots,N,  \label{scenario-1-update-x-i}\\
\lambda^{k+1} &:=& \lambda^{k} - \gamma \left(A_{1}x_{1}^{k+1}+A_{2}x_{2}^{k+1}+ \cdots + A_{N}x_{N}^{k+1}-b\right). \label{scenario-1-update-lambda}
\end{eqnarray}
The first-order optimality conditions for \eqref{scenario-1-update-x-1}-\eqref{scenario-1-update-x-i} are given respectively by $x_{i}^{k+1}\in\XCal_{i}$ and
\begin{align}
& (x_{1}-x_{1}^{k+1})^{\top}\left[
g_1(x_{1}^{k+1})-A_{1}^{\top}\lambda^{k}+\gamma A_{1}^{\top}\left(A_{1}x_{1}^{k+1}+\sum\limits_{j=2}^N A_{j}x_{j}^{k}-b\right)\right]\geq 0, \label{scenario-1-opt-x-1} \\
& (x_{i}-x_{i}^{k+1})^{\top}\left[ g_i(x_{i}^{k+1})+\mu A_i^\top A_i\left(x_i^{k+1}-x_i^0\right)-A_{i}^{\top}\lambda^{k}+\gamma A_{i}^{\top}\left(\sum\limits_{j=1}^{i} A_{j}x_{j}^{k+1} + \sum\limits_{j=i+1}^{N} A_{j}x_{j}^{k}-b\right)\right] \geq 0,\label{scenario-1-opt-x-i}
\end{align}
hold for any $x_i\in\XCal_i$ and $g_i \in \partial f_i$, a subgradient of $f_i$, for $i=1,2,\ldots,N$. Moreover, by combining with \eqref{scenario-1-update-lambda}, \eqref{scenario-1-opt-x-1}-\eqref{scenario-1-opt-x-i} can be rewritten as
\begin{align}
& (x_{1}-x_{1}^{k+1})^{\top}\left[ g_{1}(x_{1}^{k+1})-A_{1}^{\top}\lambda^{k+1} +\gamma A_{1}^{\top}\left(\sum\limits_{j=2}^N A_{j}(x_{j}^{k}-x_{j}^{k+1})\right)\right] \geq 0, \label{scenario-1-opt-x-1-lambda} \\
& (x_{i}-x_{i}^{k+1})^{\top}\left[ g_i(x_{i}^{k+1})+\mu A_i^\top A_i\left( x_i^{k+1}- x_i^0\right)-A_{i}^{\top}\lambda^{k+1}+\gamma A_{i}^{\top}\left(\sum\limits_{j=i+1}^N A_{j}(x_{j}^{k}-x_{j}^{k+1})\right)\right] \geq 0. \label{scenario-1-opt-x-i-lambda}
\end{align}

\begin{lemma}\label{lemma-associated-perturbation}
Let $(x_1^{k+1},x_{2}^{k+1},\ldots,x_N^{k+1},\lambda^{k+1})\in\Omega$ be generated by the ADMM \eqref{admm-N} from given $(x_{2}^{k},\ldots,x_{N}^{k},\lambda^{k})$. For any $u^*=(x_1^*,x_2^*,\ldots,x_N^*)\in\Omega^*$ and $\lambda\in\RR^p$, it holds true under conditions in Scenario 1 that
\begin{eqnarray}
& & f(u^*)-f(u^{k+1})+\left(\begin{array}{c} x_1^*-x_1^{k+1} \\ x_2^*-x_2^{k+1} \\ \vdots \\ x_N^*-x_{N}^{k+1} \\ \lambda-\lambda^{k+1}\end{array} \right)^{\top}
\left(\begin{array}{c} -A_1^\top\lambda^{k+1} \\ -A_2^\top\lambda^{k+1} \\ \vdots \\ -A_N^\top\lambda^{k+1} \\ \sum_{i=1}^N A_i x_i^{k+1}-b \end{array} \right) \nonumber \\
& & + \frac{1}{2\gamma}\left(\left\|\lambda-\lambda^{k}\right\|^{2}-\left\|\lambda-\lambda^{k+1}\right\|^{2}\right) + \frac{\epsilon(N-2)(N+1)}{2}\sum\limits_{i=2}^N \left\| A_i x_i^* - A_i x_i^{0}\right\|^{2} \nonumber  \\
& & + \frac{\gamma}{2}\sum\limits_{i=1}^{N-1} \left(\left\| \sum\limits_{j=1}^i A_{j}x_{j}^* + \sum\limits_{j=i+1}^N A_j x_j^{k} - b\right\|^{2}-\left\| \sum\limits_{j=1}^i A_{j}x_{j}^* + \sum\limits_{j=i+1}^N A_j x_j^{k+1} - b\right\|^{2}\right) \nonumber \\
& \geq & 0.\label{inequality-associated-perturbation}
\end{eqnarray}
\end{lemma}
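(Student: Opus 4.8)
The plan is to start from the summed first-order optimality conditions \eqref{scenario-1-opt-x-1-lambda}--\eqref{scenario-1-opt-x-i-lambda} and to peel off the three sources of difficulty (the dual/residual terms, the perturbation terms, and the coupling terms produced by the Gauss--Seidel sweep) one at a time, each time producing either a telescoping quantity or a manifestly non-negative remainder, so that the whole left-hand side of \eqref{inequality-associated-perturbation} is seen to dominate a sum of squares.

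First I would set $x_i=x_i^*$ in \eqref{scenario-1-opt-x-1-lambda}--\eqref{scenario-1-opt-x-i-lambda} and sum over $i=1,\ldots,N$. Convexity of each $f_i$ gives $(x_i^*-x_i^{k+1})^\top g_i(x_i^{k+1})\leq f_i(x_i^*)-f_i(x_i^{k+1})$, so the subgradient contributions are bounded above by $f(u^*)-f(u^{k+1})$ and the multiplier contributions reproduce exactly the inner product $\sum_i(x_i^*-x_i^{k+1})^\top(-A_i^\top\lambda^{k+1})$ appearing in \eqref{inequality-associated-perturbation}. What then remains to control is the perturbation term $\mu\sum_{i\geq2}(x_i^*-x_i^{k+1})^\top A_i^\top A_i(x_i^{k+1}-x_i^0)$ and the coupling term $\gamma\sum_i(x_i^*-x_i^{k+1})^\top A_i^\top\sum_{j>i}A_j(x_j^k-x_j^{k+1})$.

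For the dual part I would use the multiplier update \eqref{scenario-1-update-lambda} to write $\sum_iA_ix_i^{k+1}-b=\frac1\gamma(\lambda^k-\lambda^{k+1})$, then apply identity \eqref{identity-4} to $(\lambda-\lambda^{k+1})^\top(\lambda^k-\lambda^{k+1})$; combined with the telescoping term $\frac{1}{2\gamma}(\|\lambda-\lambda^k\|^2-\|\lambda-\lambda^{k+1}\|^2)$ in \eqref{inequality-associated-perturbation}, all $\lambda$-dependent quantities cancel and only $\frac{1}{2\gamma}\|\lambda^k-\lambda^{k+1}\|^2\geq0$ survives. For the perturbation part I would apply identity \eqref{identity-3} to each $(A_ix_i^*-A_ix_i^{k+1})^\top(A_ix_i^{k+1}-A_ix_i^0)$; adding the term $\frac{\mu}{2}\sum_{i\geq2}\|A_ix_i^*-A_ix_i^0\|^2$ already present in \eqref{inequality-associated-perturbation} cancels the $x^0$-to-$x^*$ distances and leaves the non-negative reserve $\frac{\mu}{2}\sum_{i=2}^N(\|A_i(x_i^*-x_i^{k+1})\|^2+\|A_i(x_i^{k+1}-x_i^0)\|^2)$, which I intend to spend in the final step.

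The crux is the coupling term together with the $\gamma$-telescoping sum. Here I would first use feasibility $\sum_jA_jx_j^*=b$ to rewrite each partial residual as $\sum_{j=1}^iA_jx_j^*+\sum_{j>i}A_jx_j-b=\sum_{j>i}A_j(x_j-x_j^*)$, expand the squared-norm differences $\|\cdot^k\|^2-\|\cdot^{k+1}\|^2$, and match them against the coupling term. Writing $s_i:=\sum_{j>i}A_j(x_j^k-x_j^{k+1})$, the block-$1$ piece combines with the surviving residual $\frac{1}{2\gamma}\|\lambda^k-\lambda^{k+1}\|^2=\frac{\gamma}{2}\|\sum_jA_jx_j^{k+1}-b\|^2$ into a perfect square $\frac{\gamma}{2}\|s_1+(\sum_jA_jx_j^{k+1}-b)\|^2\geq0$, while the leftover cross terms involve only blocks $2,\ldots,N$. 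Counting them shows they group into exactly $\frac{(N-2)(N+1)}{2}$ inner products of the form $s_i^\top(A_l(x_l^*-x_l^{k+1}))$ with $2\leq i\leq N-1$ and $l\geq i$, and each is to be dominated, via Young's inequality, by a fraction of the reserve $\frac{\mu}{2}\|A_l(x_l^*-x_l^{k+1})\|^2$ together with the leftover telescoping squares $\frac{\gamma}{2}\|s_i\|^2$. This count is precisely why the perturbation is calibrated as $\mu=\epsilon(N-2)(N+1)$ and why the regime $\gamma\leq\epsilon$, forcing $\mu/\gamma\geq(N-2)(N+1)$, is imposed in Scenario~1. I expect this absorption bookkeeping---verifying that the single reserve suffices to dominate all $\frac{(N-2)(N+1)}{2}$ leftover cross terms \emph{simultaneously}, despite each $\frac{\gamma}{2}\|s_i\|^2$ being shared across several of them---to be the main obstacle; the remaining steps are mechanical applications of \eqref{identity-4} and \eqref{identity-3}.
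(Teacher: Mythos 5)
Your proposal is correct and follows essentially the same route as the paper's own proof: both start from the summed optimality conditions evaluated at $x_i=x_i^*$, invoke convexity of each $f_i$, telescope the dual and perturbation terms via the identities \eqref{identity-4} and \eqref{identity-3}, and absorb the $\tfrac{(N-2)(N+1)}{2}$ leftover Gauss--Seidel cross terms into the $\mu$-reserve using the calibration $\mu=\epsilon(N-2)(N+1)$ together with $\gamma\le\epsilon$. Your block-$1$ perfect square $\tfrac{\gamma}{2}\|s_1+(\sum_j A_j x_j^{k+1}-b)\|^2$ and the per-pair Young's-inequality bookkeeping are just a rearrangement of the paper's application of \eqref{identity-4} (which completes the same squares and drops the negative ones, then bounds $\|\sum_{j\ge i}A_j(x_j^{k+1}-x_j^*)\|^2$ by Cauchy--Schwarz), so the two arguments coincide in substance.
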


\begin{proof}
Note that combining \eqref{scenario-1-opt-x-1-lambda}-\eqref{scenario-1-opt-x-i-lambda} yields
\begin{eqnarray}\label{inequality-associated-perturbation-1}
& & \left(\begin{array}{c} x_{1}-x_{1}^{k+1} \\ x_{2}-x_{2}^{k+1} \\ \vdots \\ x_{N}-x_{N}^{k+1} \end{array} \right)^{\top}\left[
\left(\begin{array}{c} g_1(x_{1}^{k+1})-A_{1}^{\top}\lambda^{k+1} \\ g_{2}(x_{2}^{k+1})-A_{2}^{\top}\lambda^{k+1} \\ \vdots \\ g_{N}(x_{N}^{k+1})-A_{N}^{\top}\lambda^{k+1} \end{array} \right) + \left(\begin{array}{c} 0 \\ \mu A_2^\top(A_2 x_2^{k+1}-A_2 x_2^0) \\ \vdots \\ \mu A_N^\top(A_N x_N^{k+1}-A_N x_N^0) \end{array} \right) + H\left(\begin{array}{c} x_{2}^{k}-x_{2}^{k+1} \\ \vdots \\ x_{N}^{k}-x_{N}^{k+1} \end{array} \right)\right] \nonumber \\
& \geq & 0,
\end{eqnarray}
where $H\in \RR^{\left(\sum_{i=1}^N n_i\right) \times \left(\sum_{i=2}^N n_i\right)}$ is defined as follow:
\begin{displaymath}
H := \left(\begin{array}{cccc} \gamma A_{1}^{\top}A_{2} & \gamma A_{1}^{\top}A_{3} & \cdots & \gamma A_{1}^{\top}A_{N} \\ 0 & \gamma A_{2}^{\top}A_{3} & \cdots & \gamma A_{2}^{\top}A_{N} \\ \vdots & \ddots & \ddots & \vdots \\ 0 & 0 & \cdots & \gamma A_{N-1}^{\top}A_{N} \\ 0 & 0 & \cdots & 0 \end{array} \right) .
\end{displaymath}
The key step in our proof is to bound the following terms
\begin{displaymath}
(x_{i}-x_{i}^{k+1})^{\top}A_{i}^{\top}\left( \sum\limits_{j=i+1}^N A_{j}(x_{j}^{k}-x_{j}^{k+1})\right), \ i=1,2,\ldots, N-1.
\end{displaymath}

%For the first term, we have
%\begin{eqnarray*}
%& & (x_1-x_1^{k+1})^\top A_1^\top \left[ \sum\limits_{j=2}^N A_j (x_j^k - x_j^{k+1})\right] \\
%& = & \left[(A_1 x_1 - b)-(A_1 x_1^{k+1} - b)\right]^\top \left[\left(-\sum\limits_{j=2}^N A_j x_j^{k+1}\right)-\left(-\sum\limits_{j=2}^N A_j x_j^k\right)\right] \\
%& = & \frac{1}{2}\left(\left\| A_1 x_1 + \sum\limits_{j=2}^N A_j x_j^k - b\right\|^2-\left\| A_1 x_1 + \sum\limits_{j=2}^N A_j x_j^{k+1} - b \right\|^2 \right)\\
%&  & + \frac{1}{2}\left(\left\| \sum\limits_{j=1}^N A_j x_j^{k+1} - b\right\|^2 - \left\| A_1 x_1^{k+1} + \sum\limits_{j=2}^N A_j x_j^k - b\right\|^2\right)\\
%& \leq & \frac{1}{2}\left(\left\| A_1 x_1 +\sum\limits_{j=2}^N A_j x_j^k - b\right\|^2 - \left\| A_1 x_1 + \sum\limits_{j=2}^N A_j x_j^{k+1} - b\right\|^2 \right) + \frac{1}{2\gamma^2}\|\lambda^k -\lambda^{k+1}\|^2,
%\end{eqnarray*}
%where in the second equality we used the identity \eqref{identity-4}, and the last equality follows from the updating formula for $\lambda^{k+1}$ in \eqref{scenario-1-update-lambda}.

For $i=1,2,\ldots,N-1$, we have,
\begin{eqnarray*}
& & (x_i -x_i^{k+1})^\top A_i^\top \left( \sum\limits_{j=i+1}^N A_j (x_j^k - x_j^{k+1})\right) \\
& = & \left[\left(\sum\limits_{j=1}^i A_j x_j - b\right) - \left(\sum\limits_{j=1}^{i-1} A_j x_j + A_i x_i^{k+1}-b\right)\right]^\top \left[\left(-\sum\limits_{j=i+1}^N A_j x_j^{k+1}\right)-\left(-\sum\limits_{j=i+1}^N A_j x_j^k\right)\right]\\
& = & \frac{1}{2}\left(\left\| \sum\limits_{j=1}^i A_j x_j + \sum\limits_{j=i+1}^N A_j x_j^k - b\right\|^{2}-\left\| \sum\limits_{j=1}^i A_j x_j + \sum\limits_{j=i+1}^N A_j x_j^{k+1} - b\right\|^{2}\right)\\
&   & +\frac{1}{2}\left(\left\| \sum\limits_{j=1}^{i-1} A_j x_j + \sum\limits_{j=i}^N A_j x_j^{k+1} - b\right\|^{2} - \left\| \sum\limits_{j=1}^{i-1} A_j x_j + A_i x_i^{k+1}+\sum\limits_{j=i+1}^N A_j x_j^k - b\right\|^{2}\right)\\
&\leq & \frac{1}{2}\left(\left\| \sum\limits_{j=1}^i A_j x_j + \sum\limits_{j=i+1}^N A_j x_j^k - b\right\|^{2}-\left\| \sum\limits_{j=1}^i A_j x_j + \sum\limits_{j=i+1}^N A_j x_j^{k+1} - b\right\|^{2}\right)\\
& & +\frac{1}{2}\left\| \sum\limits_{j=1}^{i-1} A_j x_j + \sum\limits_{j=i}^N A_j x_j^{k+1} - b \right\|^{2},
\end{eqnarray*}
where in the second equality we applied the identity \eqref{identity-4}.

Therefore, we have
\begin{eqnarray}
& &  \left(\begin{array}{c} x_1 - x_1^{k+1} \\ x_2 - x_2^{k+1} \\ \vdots \\ x_N - x_N^{k+1} \end{array} \right)^{\top} \left(\begin{array}{cccc} \gamma A_1^\top A_2 & \gamma A_1^\top A_3 & \cdots & \gamma A_1^\top A_N \\ 0 & \gamma A_2^\top A_3 & \cdots & \gamma A_2^\top A_N \\ \vdots & \ddots & \ddots & \vdots \\ 0 & 0 & \cdots & \gamma A_{N-1}^\top A_N \\ 0 & 0 & \cdots & 0 \end{array} \right)\left(\begin{array}{c} x_2^k - x_2^{k+1} \\ \vdots \\ x_N^k - x_N^{k+1} \end{array} \right) \nonumber \\
& \leq & \frac{\gamma}{2}\sum\limits_{i=1}^{N-1} \left(\left\| \sum\limits_{j=1}^i A_j x_j + \sum\limits_{j=i+1}^N A_j x_j^k - b\right\|^2 - \left\| \sum\limits_{j=1}^i A_j x_j + \sum\limits_{j=i+1}^N A_j x_j^{k+1} - b\right\|^2\right) \nonumber \\
 & & + \frac{1}{2\gamma}\left\|\lambda^{k+1}-\lambda^k\right\|^{2}+ \frac{\gamma}{2}\sum\limits_{i=2}^{N-1}\left\| \sum\limits_{j=1}^{i-1} A_j x_j +\sum\limits_{j=i}^N A_j x_j^{k+1}-b\right\|^2 . \label{inequality-associated-perturbation-2}
\end{eqnarray}
Combining \eqref{scenario-1-update-lambda}, \eqref{inequality-associated-perturbation-1} and \eqref{inequality-associated-perturbation-2}, it holds for any $\lambda\in\RR^p$ that
\begin{eqnarray}
& & \left(\begin{array}{c} x_1 - x_1^{k+1} \\ x_2 - x_2^{k+1} \\ \vdots \\ x_N - x_N^{k+1} \\ \lambda-\lambda^{k+1}\end{array} \right)^{\top}
\left(\begin{array}{c} g_1(x_1^{k+1}) - A_1^\top\lambda^{k+1} \\ g_2(x_2^{k+1}) - A_2^\top\lambda^{k+1} \\ \vdots \\ g_N(x_N^{k+1})-A_N^\top\lambda^{k+1} \\ \sum_{i=1}^N A_i x_i^{k+1}-b \end{array} \right) + \frac{1}{\gamma}\left( \lambda - \lambda^{k+1}\right)^\top\left( \lambda^{k+1} - \lambda^k\right) \nonumber \\
& & + \mu\sum\limits_{i=2}^N \left(x_i - x_i^{k+1}\right)^\top A_i^\top A_i\left( x_i^{k+1} - x_i^0\right) + \frac{1}{2\gamma}\left\|\lambda^{k+1} - \lambda^k \right\|^2 + \frac{\gamma}{2}\sum\limits_{i=2}^{N-1}\left\| \sum\limits_{j=1}^{i-1} A_j x_j + \sum\limits_{j=i}^N A_j x_j^{k+1} - b\right\|^2 \nonumber \\
& & + \frac{\gamma}{2}\sum\limits_{i=1}^{N-1} \left(\left\| \sum\limits_{j=1}^i A_j x_j + \sum\limits_{j=i+1}^N A_j x_j^k - b\right\|^2 -\left\| \sum\limits_{j=1}^i A_j x_j + \sum\limits_{j=i+1}^N A_j x_j^{k+1} - b\right\|^2\right) \nonumber \\
& \geq & 0. \label{inequality-associated-perturbation-3}
\end{eqnarray}
Using \eqref{identity-3}, we have
\begin{eqnarray*}
\frac{1}{\gamma}\left(\lambda-\lambda^{k+1}\right)^{\top}\left( \lambda^{k+1} - \lambda^k \right)+\frac{1}{2\gamma}\left\|\lambda^{k+1} - \lambda^k \right\|^2 = \frac{1}{2\gamma}\left(\left\|\lambda-\lambda^k \right\|^2 - \left\|\lambda-\lambda^{k+1}\right\|^2\right),
\end{eqnarray*}
and
\begin{eqnarray*}
& & \mu\left(x_i - x_i^{k+1}\right)^\top A_j^\top A_j\left( x_i^{k+1} - x_i^0 \right)  \\
& = & \frac{\mu}{2}\left( \left\| A_i x_i - A_i x_i^0 \right\|^2 - \left\| A_i x_i^{k+1} - A_i x_i^0 \right\|^{2} -  \left\| A_i x_i - A_i x_i^{k+1}\right\|^2\right) \\
& \leq & \frac{\mu}{2}\left\| A_i x_i - A_i x_i^0 \right\|^2 -  \frac{\mu}{2}\left\| A_i x_i - A_i x_i^{k+1} \right\|^2.
\end{eqnarray*}
Letting $u=u^{*}$ in \eqref{inequality-associated-perturbation-3}, and invoking the convexity of $f_i$ that
\begin{eqnarray*}
f_{i}(x_{i}^{*})-f_{i}(x_{i}^{k+1})\geq (x_{i}^{*}-x_{i}^{k+1})^{\top}
g_{i}(x_{i}^{k+1}),\quad i=1,2,\ldots,N
\end{eqnarray*}
and
\begin{eqnarray*}
\frac{\gamma}{2}\sum\limits_{i=2}^{N-1}\left\| \sum\limits_{j=1}^{i-1} A_j x_j^* + \sum\limits_{j=i}^N A_j x_j^{k+1} - b\right\|^2 & = & \frac{\gamma}{2}\sum\limits_{i=2}^{N-1}\left\| \sum\limits_{j=i}^N A_j (x_j^{k+1} - x_j^*)\right\|^2 \\
& \leq & \frac{\gamma(N+1)(N-2)}{2}\sum\limits_{i=2}^{N} \left\| A_i x_i^{k+1} - A_i x_i^*\right\|^2,
\end{eqnarray*}
we obtain,
\begin{eqnarray*}
& & f(u^*)-f(u^{k+1})+\left(\begin{array}{c} x_1^* - x_1^{k+1} \\ x_2^* - x_2^{k+1} \\ \vdots \\ x_N^* - x_N^{k+1} \\ \lambda-\lambda^{k+1}\end{array} \right)^\top
\left(\begin{array}{c} -A_1^\top\lambda^{k+1} \\ -A_2^\top\lambda^{k+1} \\ \vdots \\ -A_N^\top\lambda^{k+1} \\  \sum_{i=1}^N A_i x_i^{k+1} - b \end{array} \right)\\
& & + \frac{1}{2\gamma}\left( \left\|\lambda-\lambda^k \right\|^2 - \left\|\lambda-\lambda^{k+1}\right\|^2 \right) + \frac{\mu}{2}\sum\limits_{i=2}^N \left( \left\| A_i x_i^* - A_i x_i^0\right\|^2 - \left\| A_i x_i^* - A_i x_i^{k+1} \right\|^2 \right) \\
& & + \frac{\gamma}{2}\sum\limits_{i=1}^{N-1} \left(\left\| \sum\limits_{j=1}^i A_j x_j^* + \sum\limits_{j=i+1}^N A_j x_j^k - b\right\|^2 - \left\| \sum\limits_{j=1}^i A_j x_j^* + \sum\limits_{j=i+1}^N A_j x_j^{k+1} - b \right\|^{2}\right) \\
& & +\frac{\gamma(N+1)(N-2)}{2}\sum\limits_{i=2}^{N} \left\| A_i x_i^* - A_i x_i^{k+1}\right\|^{2} \\
& \geq & 0.
\end{eqnarray*}
This together with the facts that $\mu = \epsilon(N-2)(N+1)$ and $\gamma\leq\epsilon$ implies that
\begin{displaymath}
\frac{\gamma(N+1)(N-2)}{2}\sum\limits_{j=2}^{N} \left\| A_j x_j^* - A_j x_j^{k+1}\right\|^{2} - \frac{\mu}{2}\sum\limits_{j=2}^{N} \left\| A_j x_j^* - A_j x_j^{k+1}\right\|^{2}\leq 0,
\end{displaymath}
which further implies the desired inequality \eqref{inequality-associated-perturbation}.
\end{proof}

Now we are ready to prove the $O(1/\epsilon^2)$ iteration complexity of the ADMM for \eqref{prob:N} in an ergodic case.

\begin{theorem}\label{thm-scenario-1-ergodic-N}
Let $(x_1^{k+1},x_2^{k+1},\ldots,x_N^{k+1},\lambda^{k+1})\in\Omega$ be generated by ADMM \eqref{scenario-1-update-x-1}-\eqref{scenario-1-update-lambda} from given $(x_2^k,\ldots,x_N^k,\lambda^k)$. For any integer $t>0$, let $\bar{u}^t = (\bar{x}_1^t, \bar{x}_2^t, \ldots, \bar{x}_N^t)$ and $\bar{\lambda}^t$ be defined as
\begin{eqnarray*}
\bar{x}_{i}^{t}=\frac{1}{t+1}\sum\limits_{k=0}^{t}x_{i}^{k+1}, \ i=1,2,\ldots,N, \quad
\bar{\lambda}^{t}=\frac{1}{t+1}\sum\limits_{k=0}^{t}\lambda^{k+1}.
\end{eqnarray*}
For any $(u^*,\lambda^*) \in\Omega^{*}$, by defining $\rho:=\|\lambda^*\|+1$, it holds in Scenario 1 that,
\begin{eqnarray*}
 0 & \leq & f(\bar{u}^t)-f(u^*)+\rho\left\| \sum\limits_{i=1}^N A_i\bar{x}_i^t-b \right\| \\
\nonumber & \leq & \frac{\rho^2 + \|\lambda^0\|^2}{\gamma (t+1)} + \frac{\gamma}{2(t+1)}\sum\limits_{i=1}^{N-1}\left\| \sum\limits_{j=i+1}^N A_j (x_j^0 - x_j^*)\right\|^{2} + \frac{\epsilon(N-2)(N+1)}{2}\sum\limits_{i=2}^N \left\| A_i x_i^* - A_i x_i^0 \right\|^{2}.
\end{eqnarray*}
This also implies that when $t=O(1/\epsilon^2)$, $\bar{u}^{t}=(\bar{x}_{1}^{t}, \bar{x}_{2}^{t}, \ldots, \bar{x}_{N}^{t})$ is an $\epsilon$-optimal solution to the original problem \eqref{prob:N}, i.e., both the error of the objective function value and the residual of the equality constraint satisfy that
\be\label{scenario-1-ergodic-N}|f(\bar{u}^t)-f(u^*)| = O(\epsilon), \quad \mbox{ and } \quad \left\| \sum\limits_{i=1}^N A_i\bar{x}_i^t-b \right\|=O(\epsilon).\ee
\end{theorem}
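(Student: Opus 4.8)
The plan is to start from the per-iteration inequality \eqref{inequality-associated-perturbation} of Lemma \ref{lemma-associated-perturbation}, simplify its inner-product term, sum over $k$, and then pass to the averaged iterate by convexity. First I would simplify the inner product by exploiting feasibility $\sum_{i=1}^N A_i x_i^* = b$: expanding $\sum_{i=1}^N(x_i^*-x_i^{k+1})^\top(-A_i^\top\lambda^{k+1})+(\lambda-\lambda^{k+1})^\top\left(\sum_i A_i x_i^{k+1}-b\right)$ and substituting $\sum_i A_i x_i^*-b=0$ collapses the block-gradient and multiplier contributions into the single term $\lambda^\top\left(\sum_{i=1}^N A_i x_i^{k+1}-b\right)$. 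After this reduction, \eqref{inequality-associated-perturbation} becomes an upper bound on $f(u^{k+1})-f(u^*)-\lambda^\top\left(\sum_i A_i x_i^{k+1}-b\right)$ in terms of the dual proximal differences, the fixed perturbation term, and the telescoping constraint terms.

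Next I would sum this inequality over $k=0,1,\ldots,t$. The dual term $\frac{1}{2\gamma}\left(\|\lambda-\lambda^k\|^2-\|\lambda-\lambda^{k+1}\|^2\right)$ telescopes to at most $\frac{1}{2\gamma}\|\lambda-\lambda^0\|^2$, and for each fixed $i$ the constraint differences telescope in $k$; dropping the nonpositive $-\|\cdots^{t+1}\|^2$ remainders and again using feasibility to rewrite $\sum_{j=1}^i A_j x_j^*+\sum_{j=i+1}^N A_j x_j^0-b=\sum_{j=i+1}^N A_j(x_j^0-x_j^*)$ produces precisely the $\frac{\gamma}{2}\sum_{i=1}^{N-1}\|\sum_{j=i+1}^N A_j(x_j^0-x_j^*)\|^2$ term. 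The perturbation term is constant in $k$, so it contributes $(t+1)\frac{\mu}{2}\sum_{i=2}^N\|A_i x_i^*-A_i x_i^0\|^2$. On the left side, convexity of $f$ (Jensen) gives $f(\bar u^t)\le\frac{1}{t+1}\sum_k f(u^{k+1})$ while the residual is linear in the iterates; dividing by $t+1$ then yields, for every $\lambda$, a bound on $f(\bar u^t)-f(u^*)-\lambda^\top\left(\sum_i A_i\bar x_i^t-b\right)$.

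To extract the stated inequality I would choose $\lambda=-\rho\,r/\|r\|$, where $r:=\sum_i A_i\bar x_i^t-b$ (the case $r=0$ being trivial), so that $-\lambda^\top r=\rho\|r\|$ and $\|\lambda\|=\rho$; the elementary estimate $\|\lambda-\lambda^0\|^2\le 2\rho^2+2\|\lambda^0\|^2$ converts the dual term into $\frac{\rho^2+\|\lambda^0\|^2}{\gamma(t+1)}$, which is the claimed upper bound on $f(\bar u^t)-f(u^*)+\rho\|r\|$. For the matching lower bound $0\le f(\bar u^t)-f(u^*)+\rho\|r\|$ I would use \eqref{kkt} together with convexity of each $f_i$: summing $f_i(\bar x_i^t)-f_i(x_i^*)\ge(\bar x_i^t-x_i^*)^\top A_i^\top\lambda^*$ over $i$ gives the saddle-point inequality $f(\bar u^t)-f(u^*)\ge(\lambda^*)^\top r\ge-\|\lambda^*\|\,\|r\|$ (valid since $\bar u^t\in\Omega$ by convexity of the $\XCal_i$), and with $\rho=\|\lambda^*\|+1$ this forces $f(\bar u^t)-f(u^*)+\rho\|r\|\ge\|r\|\ge0$.

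Finally, to read off \eqref{scenario-1-ergodic-N}, I would substitute the Scenario 1 range $\frac{\epsilon}{2}\le\gamma\le\epsilon$ and $\mu=\epsilon(N-2)(N+1)$: the first term is $O\!\left(\frac{1}{\epsilon(t+1)}\right)$, the second is $O\!\left(\frac{\epsilon}{t+1}\right)$, and the third is $O(\epsilon)$, so taking $t=O(1/\epsilon^2)$ makes the whole right-hand side $O(\epsilon)$. The chain $\|r\|\le f(\bar u^t)-f(u^*)+\rho\|r\|\le O(\epsilon)$ then gives the constraint bound, while $f(\bar u^t)-f(u^*)\le O(\epsilon)$ (dropping $\rho\|r\|\ge0$) and $f(\bar u^t)-f(u^*)\ge-\|\lambda^*\|\,\|r\|=-O(\epsilon)$ together give $|f(\bar u^t)-f(u^*)|=O(\epsilon)$. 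The main obstacle I anticipate is bookkeeping rather than conceptual: correctly telescoping the $N-1$ coupled constraint terms, invoking feasibility in exactly the right places, and matching the constant $\rho^2+\|\lambda^0\|^2$ through the choice of $\lambda$; the averaging and saddle-point steps are standard once the simplified per-iteration estimate is in hand.
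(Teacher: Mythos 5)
Your proposal is correct and follows essentially the same route as the paper's proof: starting from Lemma \ref{lemma-associated-perturbation}, collapsing the inner product via feasibility of $u^*$, telescoping after averaging (Jensen), choosing $\lambda = -\rho\, r/\|r\|$ with $r=\sum_{i=1}^N A_i\bar{x}_i^t-b$ to get the upper bound, and using the KKT-plus-convexity saddle-point inequality $f(\bar{u}^t)-f(u^*)\geq (\lambda^*)^\top r$ for the lower bound. The only deviation is cosmetic: where the paper introduces the value function $v(\xi)$ to re-derive $f(\bar{u}^t)-f(u^*)\geq -\|\lambda^*\|\|r\|$ in the final step, you simply reuse the saddle-point inequality you already established, which gives the identical bound and is, if anything, slightly more self-contained.
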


\begin{proof}
Because $(u^{k},\lambda^k)\in\Omega$, it holds that $(\bar{u}^{t},\bar{\lambda}^t)\in\Omega$ for all $t\geq 0$.
By Lemma \ref{lemma-associated-perturbation} and invoking the convexity of function $f(\cdot)$, we have
\begin{eqnarray}\label{long-eq-page-10}
&   & f(u^*)-f(\bar{u}^t) + \lambda^\top \left( \sum\limits_{i=1}^N A_i\bar{x}_i^t-b \right) \nonumber \\
& = & f(u^*)-f(\bar{u}^t)+\left(\begin{array}{c} x_1^* - \bar{x}_1^t \\ x_2^* - \bar{x}_2^t \\ \vdots \\ x_N^* -\bar{x}_N^t \\ \lambda-\bar{\lambda}^t\end{array} \right)^{\top}
\left(\begin{array}{c} -A_1^\top\bar{\lambda}^t \\ -A_2^\top\bar{\lambda}^t \\ \vdots \\ -A_N^\top\bar{\lambda}^t \\ \sum_{i=1}^N A_i \bar{x}_i^t - b \end{array} \right) \nonumber \\
& \geq & \frac{1}{t+1}\sum\limits_{k=0}^{t}\left[f(u^*) - f(u^{k+1}) +\left(\begin{array}{c} x_1^* - x_1^{k+1} \\ x_2^* - x_2^{k+1} \\ \vdots \\ x_N^* - x_N^{k+1} \\ \lambda-\lambda^{k+1}\end{array} \right)^\top
\left(\begin{array}{c} -A_1^\top\lambda^{k+1} \\ -A_2^\top\lambda^{k+1} \\ \vdots \\ -A_N^\top\lambda^{k+1} \\ \sum_{i=1}^N A_i x_i^{k+1}-b \end{array} \right)\right] \nonumber\\
& \geq &\frac{1}{t+1}\sum\limits_{k=0}^t\left[ \frac{1}{2\gamma}\left(\left\|\lambda - \lambda^{k+1}\right\|^2 - \left\| \lambda-\lambda^k \right\|^2 \right) - \frac{\epsilon(N-2)(N+1)}{2}\sum\limits_{i=2}^N \left\| A_i x_i^* - A_i x_i^0 \right\|^2 \right. \nonumber\\
& & \left. +\frac{\gamma}{2}\sum\limits_{i=1}^{N-1} \left(\left\| \sum\limits_{j=1}^i A_j x_j^* + \sum\limits_{j=i+1}^N A_j x_j^{k+1} - b\right\|^2 - \left\| \sum\limits_{j=1}^i A_j x_j^* + \sum\limits_{j=i+1}^N A_j x_j^k - b\right\|^2\right)\right]\nonumber\\
& \geq & -\frac{1}{2\gamma(t+1)}\left\|\lambda - \lambda^0 \right\|^2 -\frac{\gamma}{2(t+1)}\sum\limits_{i=1}^{N-1}\left\| \sum\limits_{j=1}^i A_j x_j^* + \sum\limits_{j=i+1}^N A_j x_j^0 - b\right\|^2 \nonumber\\
& & - \frac{\epsilon(N-2)(N+1)}{2}\sum\limits_{i=2}^N \left\| A_i x_i^* - A_i x_i^0 \right\|^2.
\end{eqnarray}
Note that this inequality holds for all $\lambda\in\RR^{p}$. From the optimality condition \eqref{kkt} we obtain
\[0 \geq f(u^*)-f(\bar{u}^t) + (\lambda^*)^\top \left( \sum\limits_{i=1}^N A_i\bar{x}_i^t-b \right).\]
Moreover, since $\rho:=\|\lambda^*\|+1$, by applying Cauchy-Schwarz inequality, we obtain
\begin{eqnarray}\label{scenario-1-ergodic-N-inequality-1}
0 \leq f(\bar{u}^t)-f(u^*)+\rho\left\| \sum\limits_{i=1}^N A_i\bar{x}_i^t-b \right\|. 
\end{eqnarray}
By setting $\lambda = -\rho \left(\sum_{i=1}^N A_i\bar{x}_i^t-b\right) /\left\|\sum_{i=1}^N A_i\bar{x}_i^t-b\right\|$ in \eqref{long-eq-page-10}, and noting that $\|\lambda\| = \rho$, we obtain
%since $\rho:=\|\lambda^*\|+1$,
%$\| \lambda -\lambda_0\|^2 \le 2(\rho^{2}+\|\lambda^0\|^2)$ for all $\|\lambda\|\leq\rho$, and $\sum_{i=1}^N A_i x^*_i=b$, we obtain
\begin{eqnarray}\label{scenario-1-ergodic-N-inequality-2}
& & f(\bar{u}^t)-f(u^*)+\rho\left\| \sum\limits_{i=1}^N A_i\bar{x}_i^t-b \right\| \\
\nonumber & \leq & \frac{\rho^2 + \|\lambda^0\|^2}{\gamma (t+1)} + \frac{\gamma}{2(t+1)}\sum\limits_{i=1}^{N-1}\left\| \sum\limits_{j=i+1}^N A_j (x_j^0 - x_j^*)\right\|^{2} + \frac{\epsilon(N-2)(N+1)}{2}\sum\limits_{i=2}^N \left\| A_i x_i^* - A_i x_i^{0} \right\|^{2}.
\end{eqnarray}
When $t=O(1/\epsilon^2)$, and together with the condition that $\frac{\epsilon}{2}\leq\gamma\leq\epsilon$, we have
\begin{eqnarray}\label{scenario-1-error-order}
\frac{\rho^2 + \|\lambda^0\|^2}{\gamma (t+1)} + \frac{\gamma}{2(t+1)}\sum\limits_{i=1}^{N-1}\left\| \sum\limits_{j=i+1}^N A_j (x_j^0 - x_j^*)\right\|^{2} + \frac{\epsilon(N-2)(N+1)}{2}\sum\limits_{i=2}^N \left\| A_i x_i^* - A_i x_i^0 \right\|^{2} = O(\epsilon).
\end{eqnarray}
%where we use Assumption \ref{assumption-2} that
%\begin{displaymath}
%\sum\limits_{i=2}^N \left\| A_i x_i^* - A_i x_i^0 \right\|^{2}\leq D^2 \sum\limits_{i=2}^N \left\| A_i \right\|^{2}
%\end{displaymath}
%is a constant which does not depend on $\epsilon$.

We now define the function
\[v(\xi) = \min \{ f(u) | \sum_{i=1}^N A_i x_i - b = \xi, x_i\in \XCal_i, i=1,2,\ldots,N \}.\]
It is easy to verify that $v$ is convex, $v(0)=f(u^*)$, and $\lambda^* \in \partial v(0)$.
Therefore, from the convexity of $v$, it holds that
\be\label{scenario-1-v-convex} v(\xi) \ge v(0) + \langle \lambda^*, \xi \rangle \ge f(u^*) - \| \lambda^*\| \|\xi\|.\ee
Let $\bar{\xi} = \sum\limits_{i=1}^N A_i \bar{x}_i^t - b$, we have $f(\bar{u}^t) \ge v(\bar{\xi})$.
Therefore, combining \eqref{scenario-1-ergodic-N-inequality-1}, \eqref{scenario-1-error-order} and \eqref{scenario-1-v-convex}, we get
\begin{eqnarray*}
& & - \| \lambda^*\| \| \bar{\xi} \| \leq f(\bar{u}^t) - f(u^*) \\
& \leq & \frac{\rho^2 + \|\lambda^0\|^2}{\gamma (t+1)} + \frac{\gamma}{2(t+1)}\sum\limits_{i=1}^{N-1}\left\| \sum\limits_{j=i+1}^N A_j (x_j^0 - x_j^*)\right\|^{2} + \frac{\epsilon(N-2)(N+1)}{2}\sum\limits_{i=2}^N \left\| A_i x_i^* - A_i x_i^0 \right\|^{2} - \rho \| \bar{\xi}\| \\
& \leq & C\epsilon - \rho \| \bar{\xi}\|,
\end{eqnarray*}
which, by using $\rho=\|\lambda^*\|+1$, yields,
\begin{eqnarray}
\label{scenario-1-infeasibility}
\| \sum\limits_{i=1}^N A_i \bar{x}_i^t  - b\| = \|\bar{\xi}\| \leq C\epsilon.
\end{eqnarray}
Moreover, by combining \eqref{scenario-1-ergodic-N-inequality-1} and \eqref{scenario-1-infeasibility}, one obtains that
\begin{eqnarray}\label{scenario-1-obj-diff}
-\rho C\epsilon \leq -\rho\|\bar{\xi}\| \leq f(\bar{u}^t) - f(u^*) \leq (1-\rho)C\epsilon .
\end{eqnarray}
Finally, we note that \eqref{scenario-1-infeasibility}, \eqref{scenario-1-obj-diff} imply \eqref{scenario-1-ergodic-N}.
\end{proof}

\section{Iteration Complexity of ADMM: Kurdyka-{\L}ojasiewicz Property}\label{sec:scenario-2}
In this section, we prove an $O(1/\epsilon)$ iteration complexity of ADMM \eqref{admm-N} under the conditions in Scenario~2 of Table \ref{tab:N-scenarios}. Indeed, we prove that the ADMM for the original problem \eqref{prob:N} returns an $\epsilon$-optimal solution within $O(1/\epsilon)$ iterations in Scenario 2.

%For $i=2,\ldots,N-1$,
Under the conditions in Scenario 2, the multi-block ADMM \eqref{admm-N} for solving \eqref{prob:N} can be rewritten as: % (note that some constant terms in the subproblems are discarded):
\begin{eqnarray}
x_1^{k+1} &:=& \argmin_{x_1 \in \XCal_1} f_1(x_1) + \frac{\gamma}{2}\left\| A_1 x_1 + \sum\limits_{j=2}^{N-1} A_j x_j^k + x_N^k - b - \frac{1}{\gamma}\lambda^k \right\|^2, \label{scenario-2-update-x-1}\\
x_i^{k+1} &:=& \argmin_{x_i \in \XCal_i}f_i(x_i) + \frac{\gamma}{2}\left\| \sum\limits_{j=1}^{i-1} A_j x_j^{k+1} + A_i x_i + \sum\limits_{j=i+1}^{N-1} A_j x_j^k + x_N^k - b - \frac{1}{\gamma}\lambda^k \right\|^2, \nonumber \\
  & & \qquad \qquad \qquad \qquad \qquad \qquad \qquad \qquad \qquad \qquad \qquad \qquad
      i=2,\ldots,N-1, \label{scenario-2-update-x-i}\\
x_N^{k+1} &:=& \argmin f_N(x_N) +\frac{\gamma}{2}\left\| \sum\limits_{j=1}^{N-1} A_j x_j^{k+1} + x_N - b - \frac{1}{\gamma}\lambda^k \right\|^2, \label{scenario-2-update-x-N}\\
\lambda^{k+1} &:=& \lambda^k - \gamma \left(A_1 x_1^{k+1} + A_2 x_2^{k+1}+ \cdots + A_{N-1}x_{N-1}^{k+1} + x_N^{k+1} - b\right). \label{scenario-2-update-lambda}
\end{eqnarray}
The first-order optimality conditions for \eqref{scenario-2-update-x-1}-\eqref{scenario-2-update-x-N} are given respectively by $x_i^{k+1}\in\XCal_i, i=1,\ldots,N-1$, and
\begin{align}
& g_1(x_{1}^{k+1})-A_{1}^{\top}\lambda^{k}+\gamma A_{1}^{\top}\left(A_{1}x_{1}^{k+1}+\sum\limits_{j=2}^{N-1} A_{j}x_{j}^{k} + x_{N}^k -b\right) = 0, & \label{scenario-2-opt-x-1} \\
& g_i(x_{i}^{k+1})-A_{i}^{\top}\lambda^{k}+\gamma A_{i}^{\top}\left(\sum\limits_{j=1}^{i} A_{j}x_{j}^{k+1} + \sum\limits_{j=i+1}^{N-1} A_{j}x_{j}^{k} + x_N^k -b\right) = 0, & \label{scenario-2-opt-x-i} \\
& \nabla f_N(x_{N}^{k+1})-\lambda^{k}+\gamma \left(\sum\limits_{j=1}^{N-1} A_{j}x_{j}^{k+1} + x_{N}^{k+1}-b\right) = 0, & \label{scenario-2-opt-x-N}
\end{align}
where $g_i \in \partial \left( f_i + \mathbf{1}_{\XCal_i}\right)$ is a subgradient of $f_i + \mathbf{1}_{\XCal_i}$ for $i=1,2,\ldots,N-1$. Moreover, by combining with \eqref{scenario-2-update-lambda}, \eqref{scenario-2-opt-x-1}-\eqref{scenario-2-opt-x-N} can be rewritten as
\begin{align}
& g_{1}(x_{1}^{k+1})-A_{1}^{\top}\lambda^{k+1} +\gamma A_{1}^{\top}\left(\sum\limits_{j=2}^{N-1} A_{j}(x_{j}^{k}-x_{j}^{k+1})+(x_{N}^{k}-x_{N}^{k+1})\right) = 0, & \label{scenario-2-opt-x-1-lambda} \\
& g_i(x_{i}^{k+1})-A_{i}^{\top}\lambda^{k+1}+\gamma A_{i}^{\top}\left(\sum\limits_{j=i+1}^{N-1} A_{j}(x_{j}^{k}-x_{j}^{k+1})+(x_{N}^{k}-x_{N}^{k+1})\right) = 0, & \label{scenario-2-opt-x-i-lambda} \\
& \nabla f_N(x_{N}^{k+1})-\lambda^{k+1} = 0. &\label{scenario-2-opt-x-N-lambda}
\end{align}

%In order to make sure of the assumption
Note that in Scenario 2 we require that $\LCal_\gamma$ is a Kurdyka-{\L}ojasiewicz (KL) function. Let us first introduce the notion %definition
of the KL function and the KL property, which can be found, e.g., in
% . These notions and definitions follow from some standard literatures on KL functions (see, e.g.,
\cite{Bolte-characterizations-2010,Bolte-Sabach-Teboulle-2014}.
We denote $\dist(x,S):=\inf\{\| y-x \|:y\in S\}$ as the distance from $x$ to $S$. Let $\eta\in(0,+\infty]$. We further denote $\Phi_\eta$ to be the class of all concave and continuous functions $\varphi:[0,\eta)\rightarrow\RR_{+}$ satisfying the following conditions:
\begin{compactenum}
\item $\varphi(0)=0$;
\item $\varphi$ is $C^1$ on $(0,\eta)$ and continuous at $0$;
\item for all $s\in(0,\eta):\varphi'(s)>0$.
\end{compactenum}
\begin{definition}
Let $f:\Omega\rightarrow(-\infty,+\infty]$ be proper and lower semicontinuous.
\begin{compactenum}
\item The function $f$ has Kurdyka-{\L}ojasiewicz (KL) property at $w_0\in\{w\in\Omega:\partial f(w)\neq\emptyset\}$ if there exists $\eta\in(0,+\infty]$, a neighbourhood $W_0$ of $w_0$ and a function $\varphi\in\Phi_\eta$ such that for all
\begin{displaymath}
\bar{w}_0 \in W\cap\left\{w\in\Omega: f(w)<f(w_0)<f(w)+\eta\right\},
\end{displaymath}
the following inequality holds,
\begin{equation}\label{inequality-kl}
\varphi'(f(\bar{w}_0) - f(w_0))\dist(0,\partial f(\bar{w}_0))\geq 1.
\end{equation}
\item The function $f$ is a KL function if $f$ satisfies the KL property at each point of $\Omega\cap\{\partial f(w)\neq\emptyset\}$.
\end{compactenum}
\end{definition}

\begin{remark}
It is important to remark that most convex
functions from practical applications satisfy the KL property; see Section 5.1 of \cite{Bolte-Sabach-Teboulle-2014}. In fact, convex functions that do not satisfy the KL property exist (see \cite{Bolte-characterizations-2010} for a counterexample) but they are rare and difficult to construct.
%A general convex function may not satisfy the KL property (see \cite{Bolte-characterizations-2010} for a counterexample). However, in many applications $\LCal_\gamma$ is KL function.
Indeed, $\LCal_\gamma$ will be a KL function if each $f_i$ satisfies growth condition, or uniform convexity, or they are general convex semialgebraic or real analytic functions. We refer the interested readers to \cite{Attouch-Bolte-Svaiter-2010} and \cite{Bolte-Sabach-Teboulle-2014} for more information.
\end{remark}

The following result, which is called {\em uniformized KL property}, is from Lemma 6 of \cite{Bolte-Sabach-Teboulle-2014}.
\begin{lemma}\label{lemma_KL_property}[Lemma 6 \cite{Bolte-Sabach-Teboulle-2014}]
Let $\Omega$ be a compact set and $f: \RR^n\rightarrow(-\infty,\infty]$ be a proper and lower semi-continuous function. Assume that $f$ is constant on $\Omega$ and satisfies the KL property at each point of $\Omega$. Then, there exists $\epsilon>0$, $\eta>0$ and $\varphi\in\Phi_\eta$ such that for all $\bar{u}$ in $\Omega$ and all $u$ in the intersection:
\begin{displaymath}
\left\{u\in\RR^n: \dist(u, \Omega) < \epsilon\right\}\cap\left\{u\in\RR^n: f(\bar{u}) < f(u) < f(\bar{u})+\eta\right\},
\end{displaymath}
the following inequality holds,
\begin{displaymath}
\varphi'\left( f(u) - f(\bar{u})\right)\dist\left(0, \partial f(u)\right)\geq 1.
\end{displaymath}
\end{lemma}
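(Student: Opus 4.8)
The plan is to obtain the uniform constants $\epsilon,\eta$ and the single desingularizing function $\varphi$ from the pointwise KL property by a compactness argument, and then to fuse the finitely many local functions into one. The indispensable bookkeeping device is the hypothesis that $f$ is constant on $\Omega$: write $\zeta$ for this common value, so $f(\bar u)=\zeta$ for every $\bar u\in\Omega$. Consequently the shifted argument $f(u)-f(\bar u)$ appearing in the target inequality equals $f(u)-\zeta$ no matter which reference point $\bar u\in\Omega$ is used, which is what lets a single function serve all reference points at once.

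First I would invoke the pointwise KL property at each $\bar u\in\Omega$. This produces, for each such $\bar u$, a radius $\epsilon_{\bar u}>0$, a level $\eta_{\bar u}>0$, and a function $\varphi_{\bar u}\in\Phi_{\eta_{\bar u}}$ so that $\varphi_{\bar u}'(f(u)-\zeta)\,\dist(0,\partial f(u))\geq 1$ holds for every $u$ in $B(\bar u,\epsilon_{\bar u})\cap\{\zeta<f(u)<\zeta+\eta_{\bar u}\}$. The open balls $\{B(\bar u,\epsilon_{\bar u}/2)\}_{\bar u\in\Omega}$ cover the compact set $\Omega$, so I extract a finite subcover indexed by $\bar u_1,\ldots,\bar u_p$, with associated data $\epsilon_i,\eta_i,\varphi_i$. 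I then set $\epsilon:=\tfrac12\min_i\epsilon_i$ and $\eta:=\min_i\eta_i$, both strictly positive since the minima run over a finite set.

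The delicate step is producing one admissible $\varphi\in\Phi_\eta$ that works at every reference point. My proposal is to take $\varphi:=\sum_{i=1}^p\varphi_i$, each summand restricted to $[0,\eta)\subseteq[0,\eta_i)$. Being a finite sum of concave $C^1$ functions that vanish at $0$ and have strictly positive derivatives, $\varphi$ again lies in $\Phi_\eta$, and crucially $\varphi'(s)=\sum_i\varphi_i'(s)\geq\varphi_j'(s)$ for every index $j$ and every $s\in(0,\eta)$, since all terms are positive. For the verification, given $u$ with $\dist(u,\Omega)<\epsilon$ and $\zeta<f(u)<\zeta+\eta$, I pick $\bar u'\in\Omega$ within distance $\epsilon$ of $u$; since $\bar u'$ lies in some $B(\bar u_i,\epsilon_i/2)$, the triangle inequality gives $\|u-\bar u_i\|<\epsilon+\epsilon_i/2\leq\epsilon_i$, so $u\in B(\bar u_i,\epsilon_i)$. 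Because $f(\bar u_i)=\zeta$ and $\eta\leq\eta_i$, the level condition $\zeta<f(u)<\zeta+\eta_i$ also holds, placing $u$ exactly in the region where the pointwise inequality for $\bar u_i$ is valid. Combining that inequality with $\varphi'\geq\varphi_i'$ and $\dist(0,\partial f(u))>0$ yields $\varphi'(f(u)-\zeta)\dist(0,\partial f(u))\geq\varphi_i'(f(u)-\zeta)\dist(0,\partial f(u))\geq 1$, and since $f(u)-\zeta=f(u)-f(\bar u)$ for every $\bar u\in\Omega$, this is precisely the claimed uniform inequality.

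The main obstacle I anticipate is exactly this fusion of the local $\varphi_i$ into a single admissible $\varphi$, and it is here that the constant-value hypothesis is essential. If the reference values $f(\bar u)$ varied over $\Omega$, the arguments $f(u)-f(\bar u)$ would differ from point to point, and no single concave function evaluated at one argument could simultaneously dominate all the local desingularizing inequalities. Constancy collapses all these arguments to the common quantity $f(u)-\zeta$, which is what legitimizes both the summation trick for $\varphi$ and the uniform choices of level $\eta$ and neighborhood radius $\epsilon$.
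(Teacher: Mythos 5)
Your proof is correct, and in fact the paper offers no proof of this statement at all: it is quoted verbatim as Lemma 6 of \cite{Bolte-Sabach-Teboulle-2014}, and your argument---pointwise KL at each point of the compact set $\Omega$ (with the constant value $\zeta$ collapsing all reference levels $f(\bar u)$ to one), a finite subcover by half-radius balls, the choices $\epsilon:=\tfrac12\min_i\epsilon_i$ and $\eta:=\min_i\eta_i$, and fusing the local desingularizers into $\varphi:=\sum_{i=1}^p\varphi_i$, which remains in $\Phi_\eta$ and satisfies $\varphi'\geq\varphi_i'$ for every $i$---is essentially the proof given in that cited reference. The role you assign to the constancy hypothesis (making the single argument $f(u)-\zeta$ serve all reference points simultaneously) is exactly the point of the lemma, so nothing is missing.
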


%\begin{proof}
%The proof is omitted for succinctness. We refer the readers to Lemma 6 of \cite{Bolte-Sabach-Teboulle-2014} for details.
%\end{proof}

We now give a formal definition of the limit point set. Let the sequence $w^k = \left(x_1^k,\ldots, x_N^k,\lambda^k\right)$ be a sequence generated by the multi-ADMM \eqref{admm-N} from a starting point $w^0 = \left(x_1^0,\ldots, x_N^0,\lambda^0\right)$. The set of all limit points is denoted by $\Omega(w^0)$, i.e.,
%\begin{eqnarray*}
%\Omega(w^0) = & & \left\{ \bar{w}\in\RR^{n_1}\times\ldots\times\RR^{n_N}\times\RR^p: \exists\text{ an increasing sequence of integers }\{k_l\}_{l=1,\ldots}\right. \\
%& & \left.\text{ such that }w^{k_l}\rightarrow\bar{w}\text{ as }l\rightarrow\infty \right\}.
%\end{eqnarray*}
\[
\Omega(w^0) = \left\{ \bar{w}\in\RR^{n_1}\times\cdots\times\RR^{n_N}\times\RR^p: \exists
\text{ an infinite sequence }\{k_l\}_{l=1,\ldots}
\text{ such that }w^{k_l}\rightarrow\bar{w}\text{ as }l\rightarrow\infty \right\}.
\]

In the following we present the main results in this section. Specifically, Theorem \ref{thm-l2-convergence} gives the convergence of the multi-ADMM \eqref{admm-N}, and we include its proof in the Appendix. Theorem \ref{thm-l1-convergence} shows that the whole sequence generated by the multi-ADMM \eqref{admm-N} converges.

\begin{theorem}\label{thm-l2-convergence}
Under the conditions in Scenario 2 of Table \ref{tab:N-scenarios}, %{\color{blue} assuming $\Omega(w^0)$ is non-empty},
then: %we have the following conclusions:
%The following conclusions hold true in Scenario 2.
\begin{enumerate}
\item $\Omega(w^0)$ is a non-empty set, and any point in $\Omega(w^0)$ is a stationary point of $\LCal_\gamma(x_1,\ldots,x_N,\lambda)$;
\item $\Omega(w^0)$ is a compact and connected set;
\item The function $\LCal_\gamma(x_1,\ldots,x_N,\lambda)$ is finite and constant on  $\Omega(w^0)$.
\end{enumerate}
\end{theorem}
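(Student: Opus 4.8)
The plan is to follow the standard template for analyzing descent schemes via the Kurdyka--{\L}ojasiewicz framework: establish a \emph{sufficient decrease} of $\LCal_\gamma$ along the iterates, then a uniform lower bound, and finally read off the properties of $\Omega(w^0)$ from boundedness together with vanishing successive differences. The crux, and what I expect to be the main obstacle, is the sufficient decrease lemma. I would track the change of $\LCal_\gamma$ over one full sweep of \eqref{scenario-2-update-x-1}--\eqref{scenario-2-update-lambda}. Each primal block update minimizes $\LCal_\gamma$ over one variable, and since the subproblem objective is a convex $f_i$ plus a quadratic that is strongly convex in the seminorm $\|A_i\cdot\|$, the $i$-th update decreases $\LCal_\gamma$ by at least $\frac{\gamma}{2}\|A_i(x_i^{k+1}-x_i^k)\|^2$ for $i<N$, and by $\frac{\gamma}{2}\|x_N^{k+1}-x_N^k\|^2$ for $i=N$ (here $A_N=I$). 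The dual step \eqref{scenario-2-update-lambda} \emph{increases} $\LCal_\gamma$ by exactly $\frac{1}{\gamma}\|\lambda^{k+1}-\lambda^k\|^2$. The key point is that \eqref{scenario-2-opt-x-N-lambda} gives $\lambda^{k+1}=\nabla f_N(x_N^{k+1})$, so Lipschitz continuity of $\nabla f_N$ yields $\|\lambda^{k+1}-\lambda^k\|\le L\|x_N^{k+1}-x_N^k\|$ and bounds the dual increase by $\frac{L^2}{\gamma}\|x_N^{k+1}-x_N^k\|^2$. Combining these,
\[
\LCal_\gamma(w^{k+1}) - \LCal_\gamma(w^k) \le -\frac{\gamma}{2}\sum_{i=1}^{N-1}\|A_i(x_i^{k+1}-x_i^k)\|^2 - \frac{\gamma^2-2L^2}{2\gamma}\|x_N^{k+1}-x_N^k\|^2 ,
\]
and $\gamma>\sqrt{2}L$ makes both coefficients positive. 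This is precisely the step where all three Scenario~2 hypotheses ($A_N=I$, Lipschitz $\nabla f_N$, $\gamma>\sqrt{2}L$) are genuinely consumed.

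Next I would establish that $\LCal_\gamma$ is bounded below along the iterates. Using $A_N=I$, set $y^{k+1}=b-\sum_{j<N}A_j x_j^{k+1}$, so the residual is $r^{k+1}=x_N^{k+1}-y^{k+1}$; the descent lemma for the $L$-smooth $f_N$ together with $\lambda^{k+1}=\nabla f_N(x_N^{k+1})$ gives $f_N(x_N^{k+1})-\langle\lambda^{k+1},r^{k+1}\rangle\ge f_N(y^{k+1})-\frac{L}{2}\|r^{k+1}\|^2$, whence
\[
\LCal_\gamma(w^{k+1}) \ge \sum_{j=1}^{N-1} f_j(x_j^{k+1}) + f_N^* + \frac{\gamma-L}{2}\|r^{k+1}\|^2 \ge \sum_{j=1}^{N} f_j^* ,
\]
using $\gamma>L$ and the finite lower bounds from Assumption~\ref{assumption-2}. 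Monotone decrease plus this bound shows $\{\LCal_\gamma(w^k)\}$ converges, and telescoping the decrease inequality makes the two right-hand sums finite, so $\|A_i(x_i^{k+1}-x_i^k)\|\to0$, $\|x_N^{k+1}-x_N^k\|\to0$, hence $\|\lambda^{k+1}-\lambda^k\|\to0$ and $r^k=-\frac{1}{\gamma}(\lambda^{k+1}-\lambda^k)\to0$. Boundedness of $\{w^k\}$ then follows: the displayed lower bound confines each $f_i(x_i^{k+1})$ to a common sublevel set, so coercivity of $f_i+\mathbf{1}_{\XCal_i}$ bounds $\{x_i^k\}_{i<N}$; the bounded residual then bounds $x_N^k$; and $\lambda^k=\nabla f_N(x_N^k)$ is bounded by continuity. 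Bolzano--Weierstrass gives the nonemptiness asserted in item~1.

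Finally, for stationarity I would pass to the limit along a convergent subsequence $w^{k_l}\to\bar w$ in the reorganized optimality conditions \eqref{scenario-2-opt-x-1-lambda}--\eqref{scenario-2-opt-x-N-lambda}: every coupling term there is a linear combination of the quantities $A_j(x_j^k-x_j^{k+1})$ and $x_N^k-x_N^{k+1}$ just shown to vanish, so closedness of the subdifferential graph of each $f_i+\mathbf{1}_{\XCal_i}$ and continuity of $\nabla f_N$ yield $A_i^\top\bar\lambda\in\partial(f_i+\mathbf{1}_{\XCal_i})(\bar x_i)$, $\nabla f_N(\bar x_N)=\bar\lambda$, and $\sum_j A_j\bar x_j=b$ (from $r^k\to0$); that is, $\bar w$ is stationary, completing item~1. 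Items~2 and~3 are the standard consequences (cf. \cite{Bolte-Sabach-Teboulle-2014}) of a bounded sequence with vanishing successive differences: $\Omega(w^0)$ is closed and bounded, hence compact, and connected, while $\LCal_\gamma(w^k)$ converges to a single value which, by continuity of $\LCal_\gamma$ on its domain, is the common value of $\LCal_\gamma$ at every point of $\Omega(w^0)$. The one point demanding care is that the connectedness argument requires $\|w^{k+1}-w^k\|\to0$ in \emph{every} block; when some $A_i$ with $i<N$ is column-rank-deficient, this must be upgraded from $\|A_i(x_i^{k+1}-x_i^k)\|\to0$, for instance by invoking uniqueness or regularity of the block subproblems.
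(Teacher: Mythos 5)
Your proposal is correct and follows the same skeleton as the paper's proof (Lemma \ref{LemmaA1} plus the Appendix argument): per-block sufficient decrease $\frac{\gamma}{2}\|A_i(x_i^{k+1}-x_i^k)\|^2$, the dual increase controlled via $\lambda^{k+1}=\nabla f_N(x_N^{k+1})$ and Lipschitz continuity so that $\gamma>\sqrt{2}L$ yields a net decrease with constant $\frac{\gamma^2-2L^2}{2\gamma}$, a uniform lower bound on $\LCal_\gamma$ via the descent lemma applied at $b-\sum_{j<N}A_jx_j^{k+1}$ with constant $\frac{\gamma-L}{2}$, telescoping, coercivity-based boundedness, and Bolzano--Weierstrass; your constants match the paper's exactly. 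Two differences are worth recording. First, your stationarity argument uses a different mechanism: you pass to the limit directly in \eqref{scenario-2-opt-x-1-lambda}--\eqref{scenario-2-opt-x-N-lambda} and invoke graph closedness of $\partial(f_i+\mathbf{1}_{\XCal_i})$, which is legitimate here because these functions are proper, closed and convex, so their subdifferentials are maximal monotone with closed graphs; the paper instead follows the Bolte--Sabach--Teboulle template, first proving value convergence $f_i(x_i^{k_q})\to f_i(x_i^*)$ (liminf from lower semicontinuity, limsup from plugging $x_i^*$ into the block subproblem) and then combining the vanishing subgradient bound $\left(R_1^{k+1},\ldots,R_N^{k+1},R_\lambda^{k+1}\right)\in\partial\LCal_\gamma(w^{k+1})$ with closedness of $\partial\LCal_\gamma$. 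Your route is shorter but genuinely consumes convexity; the paper's survives for nonconvex lower semicontinuous blocks, which is why it carries the extra liminf/limsup machinery. Second, your closing caveat about connectedness is well taken and is in fact an unaddressed point in the paper itself: the analysis only yields $\|A_i(x_i^{k+1}-x_i^k)\|\to 0$ for $i<N$ (together with $\|x_N^{k+1}-x_N^k\|\to0$ and $\|\lambda^{k+1}-\lambda^k\|\to0$), yet item 2 is justified by citing Lemma 5 and Remark 5 of \cite{Bolte-Sabach-Teboulle-2014}, whose hypotheses require $\|w^{k+1}-w^k\|\to0$ in every coordinate of the full iterate; when some $A_i$ with $i<N$ has a nontrivial null space this hypothesis is not verified, so connectedness of $\Omega(w^0)$ indeed needs the extra regularity (or a restatement in terms of the image variables $A_ix_i^k$, as Theorem \ref{thm-l1-convergence} implicitly does) that you flag. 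So rather than a gap in your argument relative to the paper, this is a point where your write-up is more careful than the paper's.
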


%{\color{red} (We should be able to prove optimality in the convex case.)}

\begin{remark}
%{\color{blue} {\ }
In Theorem~\ref{thm-l2-convergence}, we do not require $\LCal_\gamma$ to be a KL function, which is only required in Theorem~\ref{thm-l1-convergence} (see next).
%\begin{enumerate}
%\item In Theorem~\ref{thm-l2-convergence}, we do not require $\LCal_\gamma$ to be a KL function, which is only required in Theorem~\ref{thm-l1-convergence} (see next).
%\item
%It should be pointed out that $\Omega(w^0)$ might be empty. However, if $\XCal_i$ is assumed to be compact for $i=1,2,\ldots,N-1$, then $\Omega(w^0)$ is non-empty. Specifically, the boundedness of $\left(x_1^k,\ldots,x_{N-1}^k\right)$ is obtained from the compactness of $\XCal_i$ for $i=1,\ldots,N-1$. The boundedness of $\left( x_N^k, \lambda^k\right)$ can be proved by using \eqref{lambda-bound-x} and \eqref{inequality-x-lambda-limit} in the Appendix. Finally, we conclude that $\Omega(w^0)$ is non-empty by using the Weierstrass Theorem.
%\end{enumerate}
%}
\end{remark}

\begin{theorem}\label{thm-l1-convergence}
Suppose that $\LCal_\gamma(x_1,\ldots, x_N,\lambda)$ is a KL function. Let the sequence $w^k =\left(x_1^k,\ldots, x_N^k,\lambda^k\right)$ be generated by the multi-block ADMM \eqref{admm-N}. Let $w^* =\left(x_1^*,\ldots, x_N^*,\lambda^*\right)\in\Omega(w^0)$, the sequence $w^k =\left(x_1^k,\ldots, x_N^k,\lambda^k\right)$ has a finite length, i.e.,
\begin{eqnarray}\label{inequality-convergence-l1}
\sum\limits_{k=0}^\infty \left(\sum\limits_{i=1}^{N-1} \| A_i x_i^k - A_i x_i^{k+1}\| + \| x_N^k - x_N^{k+1} \|+ \| \lambda^k - \lambda^{k+1}\| \right) \leq G,
\end{eqnarray}
where the constant $G$ is given by
\begin{displaymath}
G:= 2\left(\sum\limits_{i=1}^{N-1} \| A_i x_i^0 - A_i x_i^1\| + \| x_N^0 - x_N^1 \|+ \| \lambda^0 - \lambda^1\| \right) + \frac{2M\gamma(1+L^2)}{\gamma^2-2L^2}\varphi\left(\LCal_\gamma(w^1) - \LCal_\gamma(w^*)\right),
\end{displaymath}
and
\begin{displaymath}
M = \max\left( \gamma\sum\limits_{i=1}^{N-1}\left\| A_i^\top \right\|, \frac{1}{\gamma}+1+\sum\limits_{i=1}^{N-1} \left\| A_i^\top \right\|\right)>0,
\end{displaymath}
and the whole sequence $\left(A_1 x_1^k, A_2 x_2^k, \ldots, A_{N-1}x_{N-1}^k, x_N^k,\lambda^k\right)$ converges to $\left(A_1 x_1^*,\ldots, A_{N-1}x_{N-1}^*, x_N^*,\lambda^*\right)$.
\end{theorem}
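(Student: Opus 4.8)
The plan is to run the Kurdyka--{\L}ojasiewicz descent scheme of Bolte--Sabach--Teboulle, applied not to the raw iterates but to the transformed sequence $\zeta^k := (A_1 x_1^k,\ldots,A_{N-1}x_{N-1}^k, x_N^k,\lambda^k)$, since the matrices $A_i$ need not be injective and only the images $A_i x_i^k$ (and $x_N^k,\lambda^k$) can be controlled. Writing $d_k := \sum_{i=1}^{N-1}\|A_i x_i^k - A_i x_i^{k+1}\| + \|x_N^k - x_N^{k+1}\| + \|\lambda^k - \lambda^{k+1}\|$, the target $\sum_{k\ge0} d_k \le G$ rests on two quantitative estimates that issue from the analysis behind Theorem~\ref{thm-l2-convergence}: a \emph{sufficient-decrease} inequality and a \emph{relative-error} (subgradient) bound. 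First I would record these two estimates precisely, then feed them into the KL machinery.

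The sufficient-decrease estimate takes the form $\LCal_\gamma(w^k) - \LCal_\gamma(w^{k+1}) \ge \rho_1\, E_k^2$, where $E_k^2$ is the sum of the squared block increments making up $d_k$ and $\rho_1 = \tfrac{\gamma^2-2L^2}{2\gamma(1+L^2)}$. The mechanism is that each of the three primal minimizations in \eqref{scenario-2-update-x-1}--\eqref{scenario-2-update-x-N} lowers $\LCal_\gamma$, while the dual step \eqref{scenario-2-update-lambda} raises it by exactly $\tfrac1\gamma\|\lambda^{k+1}-\lambda^k\|^2$. The crucial point, and the place where both $A_N=I$ and the Lipschitz continuity of $\nabla f_N$ are used, is that the optimality relation \eqref{scenario-2-opt-x-N-lambda} gives $\lambda^{k+1} = \nabla f_N(x_N^{k+1})$, hence $\|\lambda^{k+1}-\lambda^k\| \le L\|x_N^{k+1}-x_N^k\|$; this lets the dual increase be absorbed into the $x_N$-decrease, leaving a net decrease whose coefficient $\rho_1$ is positive precisely because $\gamma>\sqrt2 L$. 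The relative-error estimate $\dist(0,\partial\LCal_\gamma(w^{k+1})) \le M\, d_k$ is obtained by writing out the blockwise partial derivatives of $\LCal_\gamma$ at $w^{k+1}$ and substituting \eqref{scenario-2-opt-x-1-lambda}--\eqref{scenario-2-opt-x-N-lambda} together with the dual update, which expresses each component as a combination of the increments appearing in $d_k$ with the constant $M$ of the statement.

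Next I would set up the desingularizing apparatus. By Theorem~\ref{thm-l2-convergence}, $\Omega(w^0)$ is nonempty, compact, connected, consists of stationary points, and $\LCal_\gamma \equiv \LCal_\gamma(w^*)$ there; by sufficient decrease the values $a_k := \LCal_\gamma(w^k) - \LCal_\gamma(w^*)$ are nonnegative, nonincreasing, and converge to $0$. Consequently, for all large $k$ the iterate $w^k$ lies in the uniformized KL neighbourhood of $\Omega(w^0)$, so Lemma~\ref{lemma_KL_property} yields $\varphi'(a_{k+1})\,\dist(0,\partial\LCal_\gamma(w^{k+1})) \ge 1$, i.e. $\varphi'(a_{k+1}) \ge \tfrac{1}{M d_k}$. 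Combining this with concavity of $\varphi$, in the form $\varphi(a_{k+1})-\varphi(a_{k+2}) \ge \varphi'(a_{k+1})\big(a_{k+1}-a_{k+2}\big)$, and with the sufficient-decrease estimate, and then passing between $d_{k+1}$ and its squared counterpart $E_{k+1}^2$, gives an inequality of the shape $d_{k+1}^2 \le \tfrac{M}{\rho_1}\, d_k\,\big(\varphi(a_{k+1})-\varphi(a_{k+2})\big)$; applying $2\sqrt{XY}\le X+Y$ converts it into $2d_{k+1} \le d_k + \tfrac{M}{\rho_1}\big(\varphi(a_{k+1})-\varphi(a_{k+2})\big)$.

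Finally I would sum this recursion over $k$: the $\varphi$-differences telescope to at most $\varphi(a_1)=\varphi\big(\LCal_\gamma(w^1)-\LCal_\gamma(w^*)\big)$, and absorbing the $\tfrac12\sum d_k$ term from the right-hand side into the left produces $\sum_k d_k \le 2d_0 + \tfrac{2M\gamma(1+L^2)}{\gamma^2-2L^2}\,\varphi(a_1) = G$, which is the claimed finite-length bound. Summability of $d_k$ forces $\zeta^k$ to be Cauchy and hence convergent; since $w^*\in\Omega(w^0)$ is a limit point, the limit must be $(A_1x_1^*,\ldots,A_{N-1}x_{N-1}^*,x_N^*,\lambda^*)$, giving the asserted convergence. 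I expect the main obstacle to be the sufficient-decrease estimate: securing a genuine descent of $\LCal_\gamma$ with \emph{no} strong convexity hinges entirely on using $A_N=I$ and Lipschitzness of $\nabla f_N$ to dominate $\|\lambda^{k+1}-\lambda^k\|$ by $\|x_N^{k+1}-x_N^k\|$, and then on the threshold $\gamma>\sqrt2 L$ to keep $\rho_1>0$. The verification that the iterates eventually enter the uniformized KL neighbourhood, and the $\ell_1$--$\ell_2$ bookkeeping needed to match the precise constant $G$, are the remaining and more routine points.
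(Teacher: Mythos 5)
Your proposal is correct and takes essentially the same route as the paper: the paper's proof simply defers to Theorem 1 of \cite{Bolte-Sabach-Teboulle-2014} with $\Psi$ replaced by $\LCal_\gamma$, and your reconstruction---sufficient decrease with constant $\frac{\gamma^2-2L^2}{2\gamma(1+L^2)}$, the subgradient bound with constant $M$ (both exactly the estimates of Lemma \ref{LemmaA1}), the uniformized KL inequality of Lemma \ref{lemma_KL_property} on the compact set $\Omega(w^0)$ where $\LCal_\gamma$ is constant, the concavity recursion, AM--GM, and telescoping to reach $G$---is precisely that argument. The $\ell_1$--$\ell_2$ constant-tracking issue you flag (an extra factor from bounding the sum of block norms by the root of the sum of squares) is real but is inherited from the paper's own statement of $G$, which the paper never verifies since it omits the proof.
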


\begin{proof}
The proof of this theorem is almost identical to the proof of Theorem 1 in \cite{Bolte-Sabach-Teboulle-2014}, by utilizing the uniformized KL property (Lemma \ref{lemma_KL_property}), and the facts that $\Omega(w^0)$ is compact, $\LCal_\gamma(w)$ is constant (proved in Theorem \ref{thm-l2-convergence}), with function $\Psi$ replaced by $\LCal_\gamma$ and some other minor changes. We thus omit the proof for succinctness.
\end{proof}

Based on Theorem \ref{thm-l1-convergence}, we prove a key lemma for analyzing the iteration complexity for the ADMM.

\begin{lemma}\label{lemma-kurdyka-lojasiewicz-property}
Let $(x_1^{k+1},x_{2}^{k+1},\ldots,x_N^{k+1},\lambda^{k+1})\in\Omega$ be generated by the multi-ADMM \eqref{scenario-2-update-x-1}-\eqref{scenario-2-update-lambda} (or equivalently, \eqref{admm-N}) from given $(x_{2}^{k},\ldots,x_{N}^{k},\lambda^{k})$. For any $u^*=(x_1^*,x_2^*,\ldots,x_N^*)\in\Omega^*$ and $\lambda\in\RR^p$, it holds in Scenario 2 that
\begin{eqnarray}
& & f(u^*)-f(u^{k+1})+\left(\begin{array}{c} x_1^* - x_1^{k+1} \\ x_2^* - x_2^{k+1} \\ \vdots \\ x_N^* - x_N^{k+1} \\ \lambda-\lambda^{k+1}\end{array} \right)^{\top}
\left(\begin{array}{c} -A_1^\top\lambda^{k+1} \\ -A_2^\top\lambda^k+1 \\ \vdots \\ -\lambda^{k+1} \\  \sum_{i=1}^{N-1} A_i x_i^{k+1} + x_N^{k+1} -b \end{array} \right) \nonumber \\
& & + \frac{\gamma}{2}\left(\left\| A_{1}x_{1}^* + \sum\limits_{i=2}^{N-1} A_i x_i^{k} + x_N^k - b\right\|^2 - \left\| A_1 x_1^* + \sum\limits_{i=2}^{N-1} A_i x_i^{k+1} + x_N^{k+1} - b\right\|^2 \right) \nonumber \\
& & + \frac{1}{2\gamma}\left(\left\|\lambda-\lambda^k \right\|^2 - \left\|\lambda-\lambda^{k+1} \right\|^2 \right) + \gamma D(N-2)\left( \sum\limits_{i=1}^{N-1} \left\| A_i x_i^k - A_i x_i^{k+1}\right\| + \left\| x_N^k - x_N^{k+1}\right\| \right) \nonumber \\
& \geq & 0, \label{inequality-kurdyka-lojasiewicz-property}
\end{eqnarray}
where $D$ is a constant.
\end{lemma}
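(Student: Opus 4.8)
The plan is to mirror the proof of Lemma~\ref{lemma-associated-perturbation}, but to replace the role played there by the perturbation terms $\tfrac{\mu}{2}\|A_i x_i-A_i x_i^0\|^2$. Since no such terms are present in Scenario~2, the quadratic-completion (telescoping) device is applied only to the block $x_1$, while all remaining cross terms are controlled at first order using the boundedness of the iterates guaranteed by Theorem~\ref{thm-l1-convergence}.

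First I would combine the optimality conditions \eqref{scenario-2-opt-x-1-lambda}--\eqref{scenario-2-opt-x-N-lambda} by taking the inner product of the $i$-th condition with $x_i^*-x_i^{k+1}$ and summing over $i=1,\ldots,N$; for $i=N$ this uses $A_N=I$ together with $\nabla f_N(x_N^{k+1})=\lambda^{k+1}$. Invoking the convexity inequality $f_i(x_i^*)-f_i(x_i^{k+1})\geq (x_i^*-x_i^{k+1})^\top g_i(x_i^{k+1})$ and using \eqref{scenario-2-update-lambda} to write the constraint residual at $u^{k+1}$ as $\tfrac1\gamma(\lambda^k-\lambda^{k+1})$, this produces a lower bound of the form
\[
f(u^*)-f(u^{k+1})+(\text{the inner-product expression of the lemma})\ \geq\ -\gamma S+\tfrac1\gamma(\lambda-\lambda^{k+1})^\top(\lambda^k-\lambda^{k+1}),
\]
where $S:=\sum_{i=1}^{N-1}(x_i^*-x_i^{k+1})^\top A_i^\top\big(\sum_{j=i+1}^{N-1}A_j(x_j^k-x_j^{k+1})+(x_N^k-x_N^{k+1})\big)$ gathers all the cross terms. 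The entire difficulty is then to absorb $-\gamma S$ into the telescoping and first-order quantities of \eqref{inequality-kurdyka-lojasiewicz-property}.

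For the index $i=1$ I would invoke the identity \eqref{identity-4} with the two factors $A_1(x_1^*-x_1^{k+1})$ and $\sum_{j=2}^{N-1}A_j(x_j^k-x_j^{k+1})+(x_N^k-x_N^{k+1})$, exactly as in Lemma~\ref{lemma-associated-perturbation}; after using \eqref{scenario-2-update-lambda} to recognize the residual, this rewrites $\gamma$ times the $i=1$ cross term as the single quadratic telescoping difference appearing in \eqref{inequality-kurdyka-lojasiewicz-property} plus $\tfrac{1}{2\gamma}\|\lambda^k-\lambda^{k+1}\|^2$ minus a nonnegative (hence discardable) term. Meanwhile, combining $\tfrac1\gamma(\lambda-\lambda^{k+1})^\top(\lambda^k-\lambda^{k+1})$ with the multiplier-telescoping term $\tfrac{1}{2\gamma}(\|\lambda-\lambda^k\|^2-\|\lambda-\lambda^{k+1}\|^2)$ via \eqref{identity-3} leaves exactly $\tfrac{1}{2\gamma}\|\lambda^k-\lambda^{k+1}\|^2$; this copy cancels the one produced by the $i=1$ completion, and the two quadratic telescoping terms cancel likewise, so all multiplier and quadratic contributions balance and what remains from the $i=1$ block is only the nonnegative leftover.

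For $i=2,\ldots,N-1$ I would bound each cross term by Cauchy--Schwarz, $\gamma(x_i^*-x_i^{k+1})^\top A_i^\top(\cdots)\leq \gamma\|A_i(x_i^*-x_i^{k+1})\|\big(\sum_{j=i+1}^{N-1}\|A_j(x_j^k-x_j^{k+1})\|+\|x_N^k-x_N^{k+1}\|\big)$, then replace $\|A_i(x_i^*-x_i^{k+1})\|$ by a uniform constant $D$ and sum; since each difference norm is repeated at most $N-2$ times, the total is dominated by $\gamma D(N-2)\big(\sum_{i=1}^{N-1}\|A_ix_i^k-A_ix_i^{k+1}\|+\|x_N^k-x_N^{k+1}\|\big)$, the first-order term of \eqref{inequality-kurdyka-lojasiewicz-property}. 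The main obstacle is the existence of the uniform constant $D$: this is exactly where Theorem~\ref{thm-l1-convergence} is indispensable, for the convergence of $\{A_ix_i^k\}$ established there implies $\sup_k\|A_i(x_i^*-x_i^{k+1})\|<\infty$, and one takes $D$ to be the largest of these suprema over $i$. Assembling the three contributions gives \eqref{inequality-kurdyka-lojasiewicz-property}.
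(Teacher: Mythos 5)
Your proposal is correct and follows essentially the same route as the paper's proof: the quadratic completion via identity \eqref{identity-4} on the $x_1$ block (whose $\tfrac{1}{2\gamma}\|\lambda^k-\lambda^{k+1}\|^2$ by-product is absorbed through identity \eqref{identity-3} into the multiplier telescoping), Cauchy--Schwarz on the blocks $i=2,\ldots,N-1$, and Theorem~\ref{thm-l1-convergence} to furnish the uniform constant $D$. Your formulation of the last step is in fact slightly more careful than the paper's, which states the bound as $\|A_ix_i^k-A_ix_i^{k+1}\|\leq D$ when what is actually used is $\sup_k\|A_i(x_i^*-x_i^{k+1})\|\leq D$, exactly as you wrote.
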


\begin{proof}
Note that combining %\eqref{scenario-2-opt-x-1-lambda}
\eqref{scenario-2-opt-x-i-lambda}-\eqref{scenario-2-opt-x-N-lambda} yields
\begin{eqnarray}\label{inequality-kurdyka-lojasiewicz-property-1}
& & \left(\begin{array}{c} x_1 - x_1^{k+1} \\ x_2 - x_2^{k+1} \\ \vdots \\ x_N - x_N^{k+1} \end{array} \right)^\top\left[
\left(\begin{array}{c} g_1(x_1^{k+1}) - A_1^\top\lambda^{k+1} \\ g_2(x_2^{k+1}) - A_2^\top\lambda^{k+1} \\ \vdots \\ \nabla f_N(x_N^{k+1}) - \lambda^{k+1} \end{array} \right) + \left(\begin{array}{cccc} \gamma A_1^\top A_2 & \gamma A_1^\top A_3 & \cdots & \gamma A_1^\top \\ 0 & \gamma A_2^\top A_3 & \cdots & \gamma A_2^\top \\ \vdots & \ddots & \ddots & \vdots \\ 0 & 0 & \cdots & \gamma A_{N-1}^\top \\ 0 & 0 & \cdots & 0 \end{array} \right)\left(\begin{array}{c} x_2^k - x_2^{k+1} \\ \vdots \\ x_N^k - x_N^{k+1} \end{array} \right)\right] \nonumber \\
& \ge  & 0,
\end{eqnarray}
where $x_i\in\XCal_i$ and $g_i\in\partial(f_i + \mathbf{1}_{\XCal_i})$ is a subgradient of $f_i + \mathbf{1}_{\XCal_i}$ for $i=1,2,\ldots,N-1$.

The key step in our proof is to bound the following terms
\begin{displaymath}
\left( x_i - x_i^{k+1}\right)^\top A_i^\top \left( \sum\limits_{j=i+1}^{N-1} A_j (x_j^k - x_j^{k+1}) + (x_N^k - x_N^{k+1})\right), \ i=1,2,\ldots, N-1.
\end{displaymath}
For the first term, we have (similar to Lemma \ref{lemma-associated-perturbation})
\begin{eqnarray*}
& & (x_1 - x_1^{k+1})^\top A_1^\top\left[ \sum\limits_{j=2}^{N-1} A_j (x_j^k - x_j^{k+1}) + (x_N^k - x_N^{k+1}) \right] \\
& \leq & \frac{1}{2}\left(\left\| A_1 x_1 +\sum\limits_{j=2}^{N-1} A_j x_j^k + x_N^k -b\right\|^2 - \left\| A_1 x_1 + \sum\limits_{j=2}^{N-1} A_j x_j^{k+1} + x_N^{k+1} - b\right\|^2 \right) + \frac{1}{2\gamma^2}\|\lambda^{k+1}-\lambda^k\|^2.
\end{eqnarray*}

For $i=2,3,\ldots,N-1$, we have,
\begin{eqnarray*}
& & (x_i - x_i^{k+1})^\top A_i^\top\left[ \sum\limits_{j=i+1}^{N-1} A_j (x_j^k - x_j^{k+1}) + (x_N^k - x_N^{k+1})\right] \\
& \leq & \left\| A_i x_i - A_i x_i^{k+1}\right\|\left[ \sum\limits_{j=i+1}^{N-1} \left\| A_j x_j^k - A_j x_j^{k+1}\right\| + \left\| x_N^k - x_N^{k+1}\right\| \right] \\
& \leq & \left\| A_i x_i - A_i x_i^{k+1}\right\|\left[ \sum\limits_{j=1}^{N-1} \left\| A_j x_j^k - A_j x_j^{k+1}\right\| + \left\| x_N^k - x_N^{k+1}\right\| \right].
\end{eqnarray*}

Therefore, %we have
\begin{eqnarray}
& &  \left(\begin{array}{c} x_1 - x_1^{k+1} \\ x_2 - x_2^{k+1} \\ \vdots \\ x_N - x_N^{k+1} \end{array} \right)^{\top} \left(\begin{array}{cccc} \gamma A_1^\top A_2 & \gamma A_1^\top A_3 & \cdots & \gamma A_1^\top \\ 0 & \gamma A_2^\top A_3 & \cdots & \gamma A_2^\top \\ \vdots & \ddots & \ddots & \vdots \\ 0 & 0 & \cdots & \gamma A_{N-1}^\top \\ 0 & 0 & \cdots & 0 \end{array} \right)\left(\begin{array}{c} x_2^k - x_2^{k+1} \\ \vdots \\ x_N^k - x_N^{k+1} \end{array} \right) \nonumber \\
& \leq & \frac{\gamma}{2}\left(\left\| A_1 x_1 + \sum\limits_{i=2}^{N-1} A_i x_i^{k} +x_N^k- b\right\|^2 - \left\| A_1 x_1 + \sum\limits_{i=2}^{N-1} A_i x_i^{k+1} + x_N^{k+1} - b\right\|^2 \right) + \frac{1}{2\gamma}\left\|\lambda^{k+1}-\lambda^{k}\right\|^2 \nonumber \\
 & & + \gamma \left(\sum\limits_{i=2}^{N-1}\left\| A_i x_i - A_i x_i^{k+1}\right\|\right)\left[ \sum\limits_{i=1}^{N-1} \left\| A_i x_i^k - A_i x_i^{k+1}\right\| + \left\| x_N^k - x_N^{k+1}\right\| \right] . \label{inequality-kurdyka-lojasiewicz-property-2}
\end{eqnarray}
Combining \eqref{scenario-2-update-lambda}, \eqref{inequality-kurdyka-lojasiewicz-property-1} and \eqref{inequality-kurdyka-lojasiewicz-property-2}, it holds for any $\lambda\in\RR^p$ that
\begin{eqnarray}
& & \left(\begin{array}{c} x_1 - x_1^{k+1} \\ x_2 - x_2^{k+1} \\ \vdots \\ x_N - x_N^{k+1} \\ \lambda-\lambda^{k+1}\end{array} \right)^{\top}
\left(\begin{array}{c} g_1(x_1^{k+1}) - A_1^\top\lambda^{k+1} \\ g_2(x_2^{k+1}) - A_2^\top\lambda^{k+1} \\ \vdots \\ \nabla f_N(x_N^{k+1}) - \lambda^{k+1} \\ \sum_{i=1}^{N-1} A_i x_i^{k+1} + x_N^{k+1} -b \end{array} \right) + \frac{1}{\gamma}\left(\lambda-\lambda^{k+1}\right)^\top\left(\lambda^{k+1} - \lambda^k\right) \nonumber \\
& & + \frac{\gamma}{2}\left(\left\| A_1 x_1 + \sum\limits_{i=2}^{N-1} A_i x_i^k + x_N^k - b\right\|^2 - \left\| A_1 x_1 + \sum\limits_{i=2}^{N-1} A_i x_i^{k+1} + x_N^{k+1} - b\right\|^2\right) + \frac{1}{2\gamma}\left\|\lambda^{k+1}-\lambda^k \right\|^2 \nonumber \\
& & + \gamma \left(\sum\limits_{i=2}^{N-1}\left\| A_i x_i - A_i x_i^{k+1}\right\|\right)\left[ \sum\limits_{i=1}^{N-1} \left\| A_i x_i^k - A_i x_i^{k+1}\right\| + \left\| x_N^k - x_N^{k+1}\right\| \right] \nonumber \\
& \geq & 0. \label{inequality-kurdyka-lojasiewicz-property-3}
\end{eqnarray}
Using \eqref{identity-3}, we have
\begin{eqnarray*}
\frac{1}{\gamma}\left(\lambda-\lambda^{k+1}\right)^{\top}\left(\lambda^{k+1}-\lambda^{k}\right)+\frac{1}{2\gamma}\left\|\lambda^{k+1}-\lambda^{k}\right\|^2 = \frac{1}{2\gamma}\left(\left\|\lambda-\lambda^k \right\|^2 - \left\|\lambda-\lambda^{k+1}\right\|^2 \right).
\end{eqnarray*}
Letting $u=u^{*}$ in \eqref{inequality-kurdyka-lojasiewicz-property-3}, and invoking the convexity of $f_i$, we obtain
\begin{eqnarray*}
& & f(u^*)-f(u^{k+1})+\left(\begin{array}{c} x_1^* - x_1^{k+1} \\ x_2^* - x_2^{k+1} \\ \vdots \\ x_N^* - x_N^{k+1} \\ \lambda-\lambda^{k+1}\end{array} \right)^\top
\left(\begin{array}{c} -A_1^\top\lambda^{k+1} \\ -A_2^\top\lambda^{k+1} \\ \vdots \\ -\lambda^{k+1} \\  \sum_{i=1}^{N-1} A_i x_i^{k+1} + x_N^{k+1} - b \end{array} \right) + \frac{1}{2\gamma}\left(\left\|\lambda-\lambda^k \right\|^2 - \left\|\lambda-\lambda^{k+1} \right\|^2 \right) \\
& & + \frac{\gamma}{2}\left(\left\| A_{1}x_{1}^* + \sum\limits_{i=2}^{N-1} A_i x_i^k + x_N^k - b\right\|^2 - \left\| A_1 x_1^* + \sum\limits_{i=2}^{N-1} A_i x_i^{k+1} + x_N^{k+1} - b\right\|^2 \right)  \\
& & + \gamma \left(\sum\limits_{i=2}^{N-1}\left\| A_i x_i^* - A_i x_i^{k+1}\right\|\right)\left[ \sum\limits_{i=1}^{N-1} \left\| A_i x_i^k - A_i x_i^{k+1}\right\| + \left\| x_N^k - x_N^{k+1}\right\| \right] \\
& \geq & 0.
\end{eqnarray*}
From Theorem \ref{thm-l1-convergence} we know that the whole sequence $\left(A_1 x_1^k, A_2 x_2^k, \ldots, A_{N-1}x_{N-1}^k, x_N^k,\lambda^k\right)$ converges to $\left(A_1 x_1^*,\ldots, A_{N-1}x_{N-1}^*, x_N^*,\lambda^*\right)$. Therefore, there exists a constant $D>0$ such that
\begin{eqnarray}
\left\| A_i x_i^k - A_i x_i^{k+1}\right\| \leq D,
\end{eqnarray}
for any $k\geq 0$ and any $i=2,3,\ldots,N-1$. This implies \eqref{inequality-kurdyka-lojasiewicz-property}.
\end{proof}

Now, we are ready to prove the $O(1/\epsilon)$ iteration complexity of the multi-block ADMM for \eqref{prob:N}.
\begin{theorem}\label{thm-scenario-2-ergodic-N}
Let $(x_1^{k+1},x_2^{k+1},\ldots,x_N^{k+1},\lambda^{k+1})\in\Omega$ be generated by ADMM \eqref{scenario-2-update-x-1}-\eqref{scenario-2-update-lambda} from given $(x_2^k,\ldots,x_N^k,\lambda^k)$. For any integer $t>0$, let $\bar{u}^t = (\bar{x}_1^t, \bar{x}_2^t, \ldots, \bar{x}_N^t)$ and $\bar{\lambda}^{t}$ be defined as
\begin{eqnarray*}
\bar{x}_i^t = \frac{1}{t+1}\sum\limits_{k=0}^{t}x_i^{k+1}, \ i=1,2,\ldots,N, \quad
\bar{\lambda}^t = \frac{1}{t+1}\sum\limits_{k=0}^t \lambda^{k+1}.
\end{eqnarray*}
For any $(u^*,\lambda^*) \in\Omega^{*}$, by defining $\rho:=\|\lambda^*\|+1$, it holds in Scenario 2 that,
\begin{eqnarray*}
 0 & \leq & f(\bar{u}^t)-f(u^*)+\rho\left\| \sum\limits_{i=1}^N A_i\bar{x}_i^t-b \right\| \\
& \leq & \frac{\rho^2 + \|\lambda^0\|^2}{\gamma (t+1)} + \frac{\gamma}{2(t+1)}\left\| \sum\limits_{i=2}^{N-1} A_i (x_i^0 - x_j^*) + (x_N^0 - x_N^*)\right\|^2 + \frac{\gamma DG}{t+1}.
\end{eqnarray*}
Note this also implies that when $t=O(1/\epsilon)$, $\bar{u}^{t}=(\bar{x}_{1}^{t}, \bar{x}_{2}^{t}, \ldots, \bar{x}_{N}^{t})$ is an $\epsilon$-optimal solution to the original problem \eqref{prob:N}, i.e., both the error of the objective function value and the residual of the equality constraint satisfy that
\be\label{scenario-2-ergodic-N}|f(\bar{u}^t)-f(u^*)| = O(\epsilon), \quad \mbox{ and } \quad \left\| \sum\limits_{i=1}^N A_i\bar{x}_i^t-b \right\|=O(\epsilon).\ee
\end{theorem}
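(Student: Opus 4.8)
The plan is to follow the same template as the proof of Theorem~\ref{thm-scenario-1-ergodic-N}, but feeding in the per-iteration estimate of Lemma~\ref{lemma-kurdyka-lojasiewicz-property} in place of Lemma~\ref{lemma-associated-perturbation}, and controlling the new ``movement'' term through the finite-length property of Theorem~\ref{thm-l1-convergence}. First I would apply Lemma~\ref{lemma-kurdyka-lojasiewicz-property} with $u=u^*$ at each $k=0,1,\ldots,t$. Using $\sum_{i=1}^{N-1}A_ix_i^*+x_N^*=b$, the stacked primal--dual inner product collapses: a short computation shows that the whole bilinear term equals $\lambda^\top\left(\sum_{i=1}^{N-1}A_ix_i^{k+1}+x_N^{k+1}-b\right)$, so that after summing over $k$ and dividing by $t+1$ it becomes exactly $\lambda^\top\bar\xi$ with $\bar\xi:=\sum_{i=1}^N A_i\bar x_i^t-b$ (recall $A_N=I$). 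Invoking convexity of $f$ via Jensen's inequality to replace $\frac1{t+1}\sum_k f(u^{k+1})$ by $f(\bar u^t)$ in the favorable direction then yields, for every $\lambda\in\RR^p$,
\[
 f(u^*)-f(\bar u^t)+\lambda^\top\bar\xi \;\ge\; \frac1{t+1}\sum_{k=0}^t\big[\text{telescoping terms}+\text{movement term}\big].
\]

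Next I would telescope the two difference terms. The dual term $\frac1{2\gamma}(\|\lambda-\lambda^k\|^2-\|\lambda-\lambda^{k+1}\|^2)$ sums to $\frac1{2\gamma(t+1)}(\|\lambda-\lambda^0\|^2-\|\lambda-\lambda^{t+1}\|^2)\ge -\frac{\|\lambda-\lambda^0\|^2}{2\gamma(t+1)}$ after dropping the nonnegative final square with the favorable sign; similarly the primal term $\frac{\gamma}{2}(\|A_1x_1^*+\sum_{i=2}^{N-1}A_ix_i^k+x_N^k-b\|^2-\|A_1x_1^*+\sum_{i=2}^{N-1}A_ix_i^{k+1}+x_N^{k+1}-b\|^2)$ telescopes, and using the constraint to rewrite $A_1x_1^*-b=-\sum_{i=2}^{N-1}A_ix_i^*-x_N^*$ turns the surviving initial square into $\|\sum_{i=2}^{N-1}A_i(x_i^0-x_i^*)+(x_N^0-x_N^*)\|^2$. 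The decisive step is the movement term $\gamma D(N-2)\sum_{k=0}^t(\sum_{i=1}^{N-1}\|A_ix_i^k-A_ix_i^{k+1}\|+\|x_N^k-x_N^{k+1}\|)$: by the finite-length estimate \eqref{inequality-convergence-l1} of Theorem~\ref{thm-l1-convergence} this partial sum is bounded by $G$ uniformly in $t$, so after division by $t+1$ it contributes only $O(1/(t+1))$ (absorbing the factor $N-2$ into the constant). This is exactly where the $O(1/\epsilon)$ rate, rather than the $O(1/\epsilon^2)$ of Scenario~1, comes from: here $\gamma>\sqrt2\,L$ is a \emph{fixed} constant and the error term genuinely decays like $1/t$, whereas in Scenario~1 the perturbation offset $\tfrac{\epsilon(N-2)(N+1)}2\sum\|A_ix_i^*-A_ix_i^0\|^2$ did not decay and forced $\gamma\sim\epsilon$. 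Combining the three pieces gives, for all $\lambda$,
\[
 f(u^*)-f(\bar u^t)+\lambda^\top\bar\xi \;\ge\; -\frac{\|\lambda-\lambda^0\|^2}{2\gamma(t+1)}-\frac{\gamma}{2(t+1)}\Big\|\sum_{i=2}^{N-1}A_i(x_i^0-x_i^*)+(x_N^0-x_N^*)\Big\|^2-\frac{\gamma DG}{t+1}.
\]

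Finally I would extract the two-sided bound exactly as in Theorem~\ref{thm-scenario-1-ergodic-N}. The saddle-point inequality coming from the optimality conditions \eqref{kkt} gives $0\ge f(u^*)-f(\bar u^t)+(\lambda^*)^\top\bar\xi$, and then Cauchy--Schwarz with $\rho=\|\lambda^*\|+1$ yields the lower bound $0\le f(\bar u^t)-f(u^*)+\rho\|\bar\xi\|$. Choosing $\lambda=-\rho\,\bar\xi/\|\bar\xi\|$ (so $\|\lambda\|=\rho$) in the displayed inequality and bounding $\|\lambda-\lambda^0\|^2\le 2\rho^2+2\|\lambda^0\|^2$ produces the claimed upper bound with leading term $(\rho^2+\|\lambda^0\|^2)/(\gamma(t+1))$. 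Since $\gamma$ is a fixed constant, every term on the right-hand side is $O(1/(t+1))$, so $t=O(1/\epsilon)$ drives it below $C\epsilon$; the value-function argument of Theorem~\ref{thm-scenario-1-ergodic-N}, namely convexity of $v(\xi)=\min\{f(u):\sum_{i=1}^N A_ix_i-b=\xi,\ x_i\in\XCal_i\}$ together with $\lambda^*\in\partial v(0)$, then converts this into the separate bounds $\|\bar\xi\|=O(\epsilon)$ and $|f(\bar u^t)-f(u^*)|=O(\epsilon)$, i.e.\ \eqref{scenario-2-ergodic-N}. The only genuinely new obstacle relative to Scenario~1 is the movement term, and the whole point is that Theorem~\ref{thm-l1-convergence}, which is where the KL hypothesis enters, makes it summable; everything else is a sign-bookkeeping adaptation of the earlier argument.
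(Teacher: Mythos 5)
Your proposal is correct and follows essentially the same route as the paper's own proof: apply Lemma~\ref{lemma-kurdyka-lojasiewicz-property} at each iteration with $u=u^*$, use convexity (Jensen) and the collapse of the bilinear term via $\sum_{i=1}^{N-1}A_ix_i^*+x_N^*=b$, telescope the dual and primal difference terms, bound the movement term uniformly by $G$ through the finite-length property \eqref{inequality-convergence-l1} of Theorem~\ref{thm-l1-convergence}, and then extract \eqref{scenario-2-ergodic-N} exactly as in Theorem~\ref{thm-scenario-1-ergodic-N} via the choice $\lambda=-\rho\bar\xi/\|\bar\xi\|$ and the value-function argument. Your handling of the factor $N-2$ (absorbing it into the constant) is consistent with the paper, whose proof actually carries $\gamma DG(N-2)/(t+1)$ while the theorem statement writes $\gamma DG/(t+1)$.
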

\begin{proof}
Because $(u^{k},\lambda^k)\in\Omega$, it holds that $(\bar{u}^{t},\bar{\lambda}^t)\in\Omega$ for all $t\geq 0$.
By Lemma \ref{lemma-kurdyka-lojasiewicz-property} and invoking the convexity of function $f(\cdot)$, we have
\begingroup
\allowdisplaybreaks
\begin{align*}
 & f(u^*)-f(\bar{u}^t) + \lambda^\top \left( \sum\limits_{i=1}^{N-1} A_i\bar{x}_i^t+\bar{x}_N^t-b \right) \\
 = & f(u^*)-f(\bar{u}^t)+\left(\begin{array}{c} x_1^* - \bar{x}_1^t \\ x_2^* - \bar{x}_2^t \\ \vdots \\ x_N^* -\bar{x}_N^t \\ \lambda-\bar{\lambda}^t\end{array} \right)^\top
\left(\begin{array}{c} -A_1^\top\bar{\lambda}^t \\ -A_2^\top\bar{\lambda}^t \\ \vdots \\ -\bar{\lambda}^t \\ \sum_{i=1}^{N-1} A_i\bar{x}_i^t + \bar{x}_N^t - b \end{array} \right)\\
 \geq & \frac{1}{t+1}\sum\limits_{k=0}^t\left[f(u^*)-f(u^{k+1})+\left(\begin{array}{c} x_1^* - x_1^{k+1} \\ x_2^* - x_2^{k+1} \\ \vdots \\ x_N^* - x_N^{k+1} \\ \lambda-\lambda^{k+1}\end{array} \right)^\top
\left(\begin{array}{c} -A_1^\top\lambda^{k+1} \\ -A_2^\top\lambda^{k+1} \\ \vdots \\ -\lambda^{k+1} \\ \sum_{i=1}^{N-1} A_i x_i^{k+1} + x_N^{k+1} - b \end{array} \right)\right]\\
 \geq & \frac{1}{t+1}\sum\limits_{k=0}^t\left[ \frac{1}{2\gamma}\left(\left\|\lambda-\lambda^{k+1}\right\|^2 - \left\|\lambda-\lambda^k \right\|^2 \right) - \gamma D(N-2)\left( \sum\limits_{i=1}^{N-1} \left\| A_i x_i^k - A_i x_i^{k+1}\right\| + \left\| x_N^k - x_N^{k+1}\right\| \right) \right. \\
 & \left. +\frac{\gamma}{2}\left(\left\| A_1 x_1^* + \sum\limits_{i=2}^{N-1} A_i x_i^{k+1} + x_N^{k+1} - b\right\|^2 - \left\| A_1 x_1^* + \sum\limits_{i=2}^{N-1} A_i x_i^k + x_N^k - b\right\|^2 \right)\right]  \\
 \geq & -\frac{1}{2\gamma(t+1)}\left\|\lambda-\lambda^0 \right\|^2 - \frac{\gamma}{2(t+1)}\left\| A_1 x_1^* + \sum\limits_{i=2}^{N-1} A_i x_i^0 + x_N^0 - b\right\|^2 \\
 & - \frac{\gamma D (N-2)}{t+1}\sum\limits_{k=0}^t \left( \sum\limits_{i=1}^{N-1} \left\| A_i x_i^k - A_i x_i^{k+1}\right\| + \left\| x_N^k - x_N^{k+1}\right\| \right) \\
 \geq & -\frac{1}{2\gamma(t+1)}\left\|\lambda-\lambda^0\right\|^2 - \frac{\gamma}{2(t+1)}\left\| A_1 x_1^* + \sum\limits_{i=2}^{N-1} A_i x_i^0 + x_N^0 - b\right\|^2 - \frac{\gamma D G (N-2)}{t+1},
\end{align*}
\endgroup
where the last inequality holds due to Theorem \ref{thm-l1-convergence}.
Note that this inequality holds for all $\lambda\in\RR^{p}$. From the optimal condition \eqref{kkt} we obtain
\[0 \geq f(u^*)-f(\bar{u}^t) + (\lambda^*)^\top \left( \sum\limits_{i=1}^{N-1} A_i\bar{x}_i^t+\bar{x}_N^t-b \right).\]
Moreover, since $\rho:=\|\lambda^*\|+1$,
$\| \lambda -\lambda_0\|^2 \le 2(\rho^{2}+\|\lambda^0\|^2)$ for all $\|\lambda\|\leq\rho$, and $\sum_{i=1}^{N-1} A_i x^*_i + x^*_N =b$, we obtain
\begin{eqnarray}\label{scenario-2-ergodic-N-inequality-1}
0 & \leq & f(\bar{u}^t)-f(u^*)+\rho\left\| \sum\limits_{i=1}^{N-1} A_i\bar{x}_i^t+\bar{x}_N^t-b \right\| \nonumber \\
& \leq & \frac{\rho^2 + \|\lambda^0\|^2}{\gamma (t+1)} + \frac{\gamma}{2(t+1)}\left\| \sum\limits_{i=2}^{N-1} A_i (x_i^0 - x_i^*) + (x_N^0 - x_N^*)\right\|^{2} + \frac{\gamma D G (N-2)}{t+1}.
\end{eqnarray}
When $t=O(1/\epsilon)$, we have
\begin{eqnarray}\label{scenario-2-error-order}
\frac{\rho^2 + \|\lambda^0\|^2}{\gamma (t+1)} + \frac{\gamma}{2(t+1)}\left\| \sum\limits_{i=2}^{N-1} A_i (x_i^0 - x_i^*) + (x_N^0 - x_N^*)\right\|^2 + \frac{\gamma D G (N-2)}{t+1} = O(\epsilon).
\end{eqnarray}

By the same argument as in the proof for Theorem~\ref{thm-scenario-1-ergodic-N}, \eqref{scenario-2-ergodic-N} follows from \eqref{scenario-2-error-order}.

\end{proof}

%\section{Conclusions}\label{sec:conclusion}

%In this paper, we analyzed the sublinear convergence rate of the multi-block ADMM under two scenarios. These are the first sublinear convergence rate results for multi-block ADMM without the strongly convexity of any objective function.

%{\color{red} (The Conclusion section appears to be quite pale. We may as well just skip that part.)}

\section*{Acknowledgements}

Research of Shiqian Ma was supported in part by the Hong Kong Research Grants Council
General Research Fund Early Career Scheme (Project ID: CUHK 439513). Research of Shuzhong Zhang was supported in part by the National Science Foundation under Grant Number CMMI-1161242.

%{\color{red} (Check the consistency of the references; e.g. the style of initials in [8])}.

\bibliographystyle{plain}
\bibliography{admmN}

\appendix
\section{Proof of Theorem \ref{thm-l2-convergence}}
We first prove a key lemma in the proof of Theorem \ref{thm-l2-convergence}.
\begin{lemma} \label{LemmaA1}
The following holds in Scenario 2,
\begin{enumerate}
\item The iterative gap of dual variable can be bounded by that of primal variable, i.e.,
\begin{equation}\label{opt-cond-N-1}
\nabla f_N(x_N^{k+1}) = \lambda^{k+1},
\end{equation}
and
\begin{equation}\label{lambda-bound-x}
\left\| \lambda^{k+1} - \lambda^{k} \right\|^2 \leq L^2 \left\| x_N^{k+1} - x_N^k \right\|,
\end{equation}
where $L$ satisfies that
\begin{displaymath}
\left\| \nabla f_N(x) - \nabla f_N(y) \right\| \leq L\left\| x - y \right\|.
\end{displaymath}
\item  The augmented Lagrangian $L_\gamma$ has a sufficient decrease in each iteration, i.e.,
\begin{eqnarray}\label{L-sufficient-decrease}
& & \LCal_\gamma(x_1^{k},\ldots, x_{N+1}^{k},\lambda^k) - \LCal_\gamma(x_1^{k+1},\ldots, x_{N+1}^{k+1},\lambda^{k+1}) \nonumber \\
& \geq & \frac{\gamma^2-2L^2}{2\gamma(1+L^2)}\left(\sum\limits_{i=1}^{N-1} \left\| A_i x_i^k - A_i x_i^{k+1} \right\|^2 + \left\| x_N^k - x_N^{k+1} \right\|^2 + \left\| \lambda^k - \lambda^{k+1} \right\|^2\right).
\end{eqnarray}
\item The augmented Lagrangian $\LCal_\gamma(w^k)$ is uniformly lower bounded, and it holds true that
\begin{equation}\label{inequality-x-lambda}
\sum\limits_{k=0}^\infty \left(\sum\limits_{i=1}^{N-1} \left\| A_i x_i^{k+1} - A_i x_i^k \right\|^2 + \left\| x_N^{k+1} - x_N^k \right\|^2 + \left\|\lambda^{k+1}-\lambda^k \right\|^2\right) \leq \frac{2\gamma(1+L^2)}{\gamma^2-2L^2}\left( \LCal_\gamma(w^0) - L^*\right)
\end{equation}
where $L^*$ is the uniformly lower bound of $\LCal_\gamma(w^k)$, and hence
\begin{equation}\label{inequality-x-lambda-limit}
\lim\limits_{k\rightarrow\infty} \left( \sum_{i=1}^{N-1} \left\| A_i x_i^k - A_i x_i^{k+1} \right\|^2 + \left\| x_N^k - x_N^{k+1} \right\|^2 + \left\| \lambda^k - \lambda^{k+1} \right\|^2 \right)= 0.
\end{equation}
Moreover, $\left\{\left(x_1^k, x_2^k, \ldots,x_N^k, \lambda^k\right): k=0,1,\ldots\right\}$ is a bounded sequence.
\item There exists a upper bound for a subgradient of augmented Lagrangian $\LCal_\gamma$ in each iteration. Indeed, we define
\begin{eqnarray*}
R_i^{k+1} = \gamma A_i^\top \left(\sum\limits_{i=1}^{N-1} A_i x_i^{k+1} + x_N^{k+1} - b\right) - \gamma A_{i}^{\top}\left(\sum\limits_{j=i+1}^{N-1} A_{j}(x_{j}^{k}-x_{j}^{k+1})+(x_{N}^{k}-x_{N}^{k+1})\right)
\end{eqnarray*}
and
\[
R_N^{k+1} = \gamma \left(\sum\limits_{i=1}^{N-1} A_i x_i^{k+1} + x_N^{k+1} - b\right), \quad R_{\lambda}^{k+1} = b - \sum\limits_{i=1}^{N-1} A_i x_i^{k+1} - x_N^{k+1}
\]
for each positive integer $k$, and $i = 1,2,\ldots, N$. Then $\left( R_1^{k+1}, \ldots, R_N^{k+1}, R_\lambda^{k+1}\right)\in\partial\LCal_\gamma(w^{k+1})$. Moreover, it holds that
\begin{eqnarray}\label{inequality-subgradient-upper-bound}
& & \left\| \left( R_1^{k+1}, \ldots, R_N^{k+1}, R_\lambda^{k+1}\right) \right\| \nonumber \\
& \leq & \sum\limits_{i=1}^N \left\| R_i^{k+1} \right\| + \left\| R_\lambda^{k+1} \right\| \nonumber \\
& \leq & M\left( \sum\limits_{i=1}^{N-1} \left\| A_i x_i^k - A_i x_i^{k+1}\right\| + \left\| x_i^k - x_i^{k+1} \right\| + \left\| \lambda^k - \lambda^{k+1} \right\|\right), \quad \forall k\geq 0,
\end{eqnarray}
where $M$ is a constant defined as
\begin{equation}\label{def-M}
M = \max\left( \gamma\sum\limits_{i=1}^{N-1}\left\| A_i^\top \right\|, \frac{1}{\gamma}+1+\sum\limits_{i=1}^{N-1} \left\| A_i^\top \right\|\right)>0.
\end{equation}
\end{enumerate}
\end{lemma}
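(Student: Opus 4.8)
The plan is to prove the four claims in order, since each later part consumes the earlier ones; the genuine work sits in Parts 2 and 3, while Parts 1 and 4 are manipulations of the optimality conditions already recorded in \eqref{scenario-2-opt-x-1-lambda}--\eqref{scenario-2-opt-x-N-lambda}.

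\textbf{Part 1.} First I would take the optimality condition \eqref{scenario-2-opt-x-N} for the $x_N$-subproblem and substitute the multiplier update \eqref{scenario-2-update-lambda}, which says exactly that $\gamma\big(\sum_{j=1}^{N-1}A_jx_j^{k+1}+x_N^{k+1}-b\big)=\lambda^k-\lambda^{k+1}$. This collapses the condition to $\nabla f_N(x_N^{k+1})=\lambda^{k+1}$, i.e.\ \eqref{opt-cond-N-1} (equivalently \eqref{scenario-2-opt-x-N-lambda}). Applying this relation at iterations $k$ and $k+1$ and invoking the $L$-Lipschitz continuity of $\nabla f_N$ gives $\|\lambda^{k+1}-\lambda^k\|=\|\nabla f_N(x_N^{k+1})-\nabla f_N(x_N^k)\|\le L\|x_N^{k+1}-x_N^k\|$, which is \eqref{lambda-bound-x}.

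\textbf{Part 2.} This is the crux and the main obstacle. I would track the change of $\LCal_\gamma$ across the five updates of one iteration. For each block $i\le N-1$ the subproblem minimizes a convex function whose quadratic part is $\frac{\gamma}{2}\|c+A_ix_i\|^2$; combining its first-order condition with the convexity of $f_i$ cancels the cross terms and leaves a decrease of $\frac{\gamma}{2}\|A_i(x_i^k-x_i^{k+1})\|^2$. Because $A_N=I$, the $x_N$-subproblem is $\gamma$-strongly convex and the same computation gives a decrease of $\frac{\gamma}{2}\|x_N^k-x_N^{k+1}\|^2$. The multiplier step instead \emph{increases} $\LCal_\gamma$ by exactly $\frac{1}{\gamma}\|\lambda^{k+1}-\lambda^k\|^2$, since the residual equals $-\frac{1}{\gamma}(\lambda^{k+1}-\lambda^k)$. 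Summing,
\[\LCal_\gamma(w^k)-\LCal_\gamma(w^{k+1})\ \ge\ \sum_{i=1}^{N-1}\frac{\gamma}{2}\|A_i(x_i^k-x_i^{k+1})\|^2+\frac{\gamma}{2}\|x_N^k-x_N^{k+1}\|^2-\frac{1}{\gamma}\|\lambda^{k+1}-\lambda^k\|^2.\]
I would then absorb the dual-ascent term with Part 1: bounding $\frac{1}{\gamma}\|\lambda^{k+1}-\lambda^k\|^2\le\frac{L^2}{\gamma}\|x_N^k-x_N^{k+1}\|^2$ leaves the coefficient $\frac{\gamma^2-2L^2}{2\gamma}$ on $\|x_N^k-x_N^{k+1}\|^2$, which is positive precisely because $\gamma>\sqrt2\,L$. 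Finally, to recover the common constant $C:=\frac{\gamma^2-2L^2}{2\gamma(1+L^2)}$ in front of all three increments, note $C(1+L^2)=\frac{\gamma^2-2L^2}{2\gamma}$ and re-split the $x_N$-slack as $C\|x_N^k-x_N^{k+1}\|^2+C\|\lambda^{k+1}-\lambda^k\|^2$ using $\|\lambda^{k+1}-\lambda^k\|^2\le L^2\|x_N^k-x_N^{k+1}\|^2$; the $A_i$-terms need only $\frac{\gamma}{2}\ge C$, which is immediate. The delicate point is exactly this choice of $C$ so that the leftover $x_N$-budget pays for both the $x_N$- and $\lambda$-increments.

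\textbf{Part 3.} Telescoping the inequality of Part 2 yields $\sum_{k=0}^{T}(\cdots)\le\frac{1}{C}\big(\LCal_\gamma(w^0)-\LCal_\gamma(w^{T+1})\big)$, so it remains to produce a uniform lower bound $L^*$. For this I would use $\lambda^{k+1}=\nabla f_N(x_N^{k+1})$ together with the descent lemma for the $L$-smooth $f_N$ at the shifted point $y^{k+1}:=b-\sum_{j=1}^{N-1}A_jx_j^{k+1}$, for which the residual is $r^{k+1}=x_N^{k+1}-y^{k+1}$; this gives $f_N(x_N^{k+1})-\langle\lambda^{k+1},r^{k+1}\rangle\ge f_N(y^{k+1})-\frac{L}{2}\|r^{k+1}\|^2$, hence $\LCal_\gamma(w^{k+1})\ge\sum_{j=1}^{N-1}f_j(x_j^{k+1})+f_N(y^{k+1})+\frac{\gamma-L}{2}\|r^{k+1}\|^2$. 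Since $\XCal_N=\RR^{n_N}$ makes $f_N(y^{k+1})$ well-defined and $\gamma>L$, Assumption~\ref{assumption-2}'s lower bounds give $\LCal_\gamma(w^{k+1})\ge\sum_{j=1}^{N}f_j^*=:L^*$. Then \eqref{inequality-x-lambda} and \eqref{inequality-x-lambda-limit} follow, and boundedness of the iterates comes from the same estimate: $\LCal_\gamma(w^k)\le\LCal_\gamma(w^0)$ keeps $\sum_{j=1}^{N-1}f_j(x_j^k)+\frac{\gamma-L}{2}\|r^k\|^2$ bounded, so coercivity of $f_j+\mathbf{1}_{\XCal_j}$ bounds $x_1^k,\dots,x_{N-1}^k$, the residual bound controls $x_N^k$, and $\lambda^k=\nabla f_N(x_N^k)$ is bounded in turn.

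\textbf{Part 4.} I would write the three blocks of $\partial\LCal_\gamma(w^{k+1})$ explicitly. For $i\le N-1$, using $g_i\in\partial(f_i+\mathbf{1}_{\XCal_i})(x_i^{k+1})$ and the rewritten condition \eqref{scenario-2-opt-x-i-lambda} to eliminate $g_i-A_i^\top\lambda^{k+1}$ leaves precisely the stated $R_i^{k+1}$; the $x_N$-component reduces to $R_N^{k+1}=\gamma r^{k+1}$ after $\nabla f_N(x_N^{k+1})=\lambda^{k+1}$, and the $\lambda$-component is $R_\lambda^{k+1}=-r^{k+1}$ by definition, so $(R_1^{k+1},\dots,R_N^{k+1},R_\lambda^{k+1})\in\partial\LCal_\gamma(w^{k+1})$. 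For the bound I would use $\gamma r^{k+1}=\lambda^k-\lambda^{k+1}$, giving $\|R_N^{k+1}\|=\|\lambda^k-\lambda^{k+1}\|$, $\|R_\lambda^{k+1}\|=\frac{1}{\gamma}\|\lambda^k-\lambda^{k+1}\|$, and $R_i^{k+1}=A_i^\top(\lambda^k-\lambda^{k+1})-\gamma A_i^\top\big(\sum_{j=i+1}^{N-1}A_j(x_j^k-x_j^{k+1})+(x_N^k-x_N^{k+1})\big)$; a triangle-inequality pass that collects the coefficients of $\|\lambda^k-\lambda^{k+1}\|$, $\|x_N^k-x_N^{k+1}\|$, and each $\|A_j(x_j^k-x_j^{k+1})\|$ yields \eqref{inequality-subgradient-upper-bound}, the two entries of the $\max$ in \eqref{def-M} being exactly the largest coefficients of the $x_N/A_j$-type and of the $\lambda$-type terms. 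Apart from the balancing argument in Part 2 and the shifted-point lower bound in Part 3, everything here is routine matching of constants.
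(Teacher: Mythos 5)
Your proposal is correct and follows essentially the same route as the paper's proof: the same collapse of the $x_N$-optimality condition to $\nabla f_N(x_N^{k+1})=\lambda^{k+1}$, the same block-by-block descent estimates absorbed against the dual ascent term via the Lipschitz bound and split with the constant $\frac{\gamma^2-2L^2}{2\gamma(1+L^2)}$, the same descent-lemma lower bound at the shifted point $b-\sum_{j=1}^{N-1}A_jx_j^{k+1}$ with telescoping and coercivity, and the same explicit subgradient computation with matching constants. (Your Part 2 even states the dual-update sign correctly, where the paper's display \eqref{L-sufficient-decrease-2} has a harmless sign typo.)
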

%\begin{proof}

\textbf{Proof of Lemma~\ref{LemmaA1}.}

\begin{enumerate}
\item \eqref{opt-cond-N-1} follows from \eqref{scenario-2-opt-x-N-lambda} directly. Then we consider the inequality \eqref{lambda-bound-x}. It follows from \eqref{opt-cond-N-1} and the fact that $\nabla f_N$ is Lipschitz continuous with $L$ that
\begin{eqnarray*}
\left\| \lambda^{k+1} - \lambda^k \right\|^2
= \left\| \nabla f(x_N^{k+1}) - \nabla f(x_N^k) \right\|^2 \leq L^2 \left\| x_N^{k+1} - x_N^k \right\|^2.
\end{eqnarray*}
\item Multiply both sides of \eqref{scenario-2-opt-x-1} by $x_1^k - x_1^{k+1}$, and invoking the convexity of $f_1$, we have
\begingroup
\allowdisplaybreaks
\begin{align}\label{inequality-decrease-1}
0 = & \left(x_1^k - x_1^{k+1}\right)^\top\left[ g_1(x_{1}^{k+1})-A_{1}^{\top}\lambda^{k}+\gamma A_{1}^{\top}\left(A_{1}x_{1}^{k+1}+\sum\limits_{j=2}^{N-1} A_{j}x_{j}^{k} + x_{N}^k -b\right) \right] \nonumber \\
 \leq & f(x_1^k) - f(x_1^{k+1}) - \left(A_1 x_1^k - A_1 x_1^{k+1}\right)^\top\lambda^k  \nonumber \\
 & + \gamma\left(A_1 x_1^k - A_1 x_1^{k+1}\right)^\top\left(A_{1}x_{1}^{k+1}+\sum\limits_{j=2}^{N-1} A_{j}x_{j}^{k} + x_{N}^k -b\right) \nonumber \\
 = & \left( f(x_1^k) - A_1 x_1^k + \frac{\gamma}{2}\left\| \sum\limits_{j=1}^{N-1} A_{j}x_{j}^{k} + x_{N}^k -b\right\|^2\right) - \frac{\gamma}{2}\left\| A_1 x_1^k - A_1 x_1^{k+1}\right\|^2 \nonumber \\
 & - \left( f(x_1^{k+1}) - A_1 x_1^{k+1} + \frac{\gamma}{2}\left\| A_1 x_1^{k+1} + \sum\limits_{j=2}^{N-1} A_{j}x_{j}^{k} + x_{N}^k -b\right\|^2 \right) \nonumber \\
 = & \LCal_\gamma(x_1^k,\ldots, x_N^k,\lambda^k) - \LCal_\gamma(x_1^{k+1},x_2^k,\ldots, x_N^k,\lambda^k) - \frac{\gamma}{2}\left\| A_1 x_1^k - A_1 x_1^{k+1}\right\|^2
\end{align}
\endgroup
where the second equality holds due to \eqref{identity-3}.

For $i=2,3,\ldots,N$, we can derive from \eqref{scenario-2-opt-x-i} and \eqref{scenario-2-opt-x-N} that %, and get similar results that
\begin{eqnarray}\label{inequality-decrease-i}
& & \LCal_\gamma\left(x_1^{k+1},\ldots, x_{i-1}^{k+1}, x_i^k, \ldots,\lambda^k\right) - \LCal_\gamma\left(x_1^{k+1},\ldots, x_i^{k+1}, x_{i+1}^k,\ldots,\lambda^k\right) \nonumber \\
&\geq& \frac{\gamma}{2} \left\| A_i x_i^k - A_i x_i^{k+1} \right\|^2.
\end{eqnarray}
Summing \eqref{inequality-decrease-1} and \eqref{inequality-decrease-i} over $i=2,\ldots,N$, we have
\begin{eqnarray}\label{L-sufficient-decrease-1}
& & \LCal_\gamma\left(x_1^k,\ldots,x_N^k,\lambda^k\right) -  \LCal_\gamma\left(x_1^{k+1},\ldots, x_N^{k+1},\lambda^k\right) \nonumber \\
&\geq& \frac{\gamma}{2}\sum\limits_{i=1}^{N-1}\left\| A_i x_i^k - A_i x_i^{k+1} \right\|^2 + \frac{\gamma}{2}\left\| x_N^k - x_N^{k+1} \right\|^2.
\end{eqnarray}
On the other hand, it follows from \eqref{opt-cond-N-1} that
\begin{eqnarray}\label{L-sufficient-decrease-2}
& & \LCal_\gamma\left(x_1^{k+1},\ldots, x_N^{k+1},\lambda^k\right) - \LCal_\gamma\left(x_1^{k+1},\ldots, x_N^{k+1},\lambda^{k+1}\right) \nonumber \\
& = & \frac{1}{\gamma}\left\| \lambda^k - \lambda^{k+1} \right\|^2 \geq - \frac{L^2}{\gamma}\left\| x_N^k - x_N^{k+1} \right\|^2.
\end{eqnarray}
Combining \eqref{L-sufficient-decrease-1} and \eqref{L-sufficient-decrease-2} yields
\begin{eqnarray}\label{inequality_decrease_total}
& & \LCal_\gamma\left(x_1^k,\ldots, x_N^k,\lambda^k\right) - \LCal_\gamma\left(x_1^{k+1},\ldots, x_N^{k+1},\lambda^{k+1}\right) \nonumber \\
& \geq & \frac{\gamma}{2}\sum\limits_{i=1}^{N-1}\left\| A_i x_i^k - A_i x_i^{k+1} \right\|^2 + \frac{\gamma^2 - 2L^2}{2\gamma} \left\| x_N^k - x_N^{k+1} \right\|^2 \nonumber \\
& = & \frac{\gamma}{2}\sum\limits_{i=1}^{N-1}\left\| A_i x_i^k - A_i x_i^{k+1} \right\|^2 + \frac{\gamma^2 - 2L^2}{2\gamma(1+L^2)}\left\| x_N^k - x_N^{k+1} \right\|^2 +  \frac{L^2(\gamma^2 - 2L^2)}{2\gamma(1+L^2)}\left\| x_N^k - x_N^{k+1} \right\|^2 \nonumber \\
& \geq & \frac{\gamma}{2}\sum\limits_{i=1}^{N-1}\left\| A_i x_i^k - A_i x_i^{k+1} \right\|^2 + \frac{\gamma^2 - 2L^2}{2\gamma(1+L^2)}\left( \left\| x_N^k - x_N^{k+1} \right\|^2 + \left\| \lambda^k - \lambda^{k+1} \right\|^2\right) \nonumber \\
& \geq & \frac{\gamma^2 - 2L^2}{2\gamma(1+L^2)}\left(\sum\limits_{i=1}^{N-1}\left\| A_i x_i^k - A_i x_i^{k+1} \right\|^2 + \left\| x_N^k - x_N^{k+1} \right\|^2 + \left\| \lambda^k - \lambda^{k+1} \right\|^2\right)
\end{eqnarray}
where the last inequality holds due to the fact that
\begin{displaymath}
\frac{\gamma}{2}\geq\frac{\gamma^2 - 2L^2}{2\gamma(1+L^2)}.
\end{displaymath}
\item Note that %We have the following series of inequalities
\begin{eqnarray*}
& & \LCal_\gamma\left(x_1^{k+1},\ldots, x_N^{k+1},\lambda^{k+1}\right) \\
& = & \sum_{i=1}^{N-1} f_i(x_i^{k+1}) + f_N(x_N^{k+1})- \left\langle \lambda^{k+1}, \sum_{i=1}^{N-1} A_i x_i^{k+1} + x_N^{k+1} -b\right\rangle + \frac{\gamma}{2}\left\|\sum_{i=1}^{N-1} A_i x_i^{k+1} + x_N^{k+1} -b\right\|^2.
\end{eqnarray*}
It follows from \eqref{opt-cond-N-1} and the fact that $\nabla f_N$ is Lipschitz continuous with constant $L$ that,
\begin{eqnarray*}
& & f_N\left( b - \sum\limits_{i=1}^{N-1} A_i x_i^{k+1}\right) \\
& \leq & f_N(x_N^{k+1}) + \left\langle \nabla f_N(x_N^{k+1}), \left( b - \sum\limits_{i=1}^{N-1} A_i x_i^{k+1} - x_N^{k+1}\right)\right\rangle + \frac{L}{2}\left\| b - \sum\limits_{i=1}^{N-1} A_i x_i^{k+1} - x_N^{k+1} \right\|^2 \\
& = & f_N(x_N^{k+1}) - \left\langle \nabla f_N(x_N^{k+1}), \sum\limits_{i=1}^{N-1} A_i x_i^{k+1} + x_N^{k+1} - b\right\rangle + \frac{L}{2}\left\| \sum\limits_{i=1}^{N-1} A_i x_i^{k+1} + x_N^{k+1} - b\right\|^2 \\
& = & f_N(x_N^{k+1}) - \left\langle \lambda^{k+1}, \sum\limits_{i=1}^{N-1} A_i x_i^{k+1} + x_N^{k+1} - b\right\rangle + \frac{L}{2} \left\| \sum\limits_{i=1}^{N-1} A_i x_i^{k+1} + x_N^{k+1} - b\right\|^2.
\end{eqnarray*}
This implies that there exists $L^*>-\infty$, such that
\begin{eqnarray}\label{L-lower-bound}
& & \LCal_\gamma(x_1^{k+1},\ldots, x_N^{k+1},\lambda^{k+1}) \nonumber \\
& \geq & \sum_{i=1}^{N-1} f_i(x_i^{k+1}) + f_N\left( b - \sum\limits_{i=1}^{N-1} A_i x_i^{k+1}\right) + \frac{\gamma-L}{2} \left\|\sum_{i=1}^{N-1} A_i x_i^{k+1} + x_N^{k+1} -b\right\|^2 \nonumber \\
& > & L^* ,
\end{eqnarray}
where the last inequality holds since $\gamma>L$ and $\inf_{\XCal_i}f_i>f_i^*$ for $i=1,2,\ldots,N$.

Therefore, it directly follows from \eqref{L-sufficient-decrease} and $\gamma>\sqrt{2}L$ that,
\begin{eqnarray*}
\frac{\gamma^2-2L^2}{2\gamma(1+L^2)}\sum\limits_{k=0}^K \left(\sum\limits_{i=1}^{N-1} \| A_i x_i^{k+1} - A_i x_i^k \|^2 + \| x_N^{k+1} - x_N^k \|^2 + \|\lambda^{k+1}-\lambda^k\|^2\right) \leq \LCal_\gamma(w^0) - L^*.
\end{eqnarray*}
Letting $K\rightarrow\infty$, we have
\begin{eqnarray*}
\frac{\gamma^2-2L^2}{2\gamma(1+L^2)}\sum\limits_{k=0}^\infty \left(\sum\limits_{i=1}^{N-1} \| A_i x_i^{k+1} - A_i x_i^k \|^2 + \| x_N^{k+1} - x_N^k \|^2 + \|\lambda^{k+1}-\lambda^k\|^2\right) \leq \LCal_\gamma(w^0) - L^*,
\end{eqnarray*}
which implies \eqref{inequality-x-lambda} and \eqref{inequality-x-lambda-limit}.

It also follows from \eqref{L-lower-bound}, \eqref{L-sufficient-decrease} and $\gamma>\sqrt{2}L$ that $\LCal_\gamma(w^0) - f_N^* \geq\sum_{i=1}^{N-1} f_i(x_i^{k+1})$. This implies that $\left\{\left(x_1^k, x_2^k, \ldots,x_{N-1}^k\right): k=0,1,\ldots\right\}$ is a bounded sequence by using the coerciveness of $f_i+\textbf{1}_{\XCal_i}, i=1,2,\ldots,N-1$. The boundedness of $\left(x_N^k, \lambda^k\right)$ can be obtained by using \eqref{scenario-2-update-lambda}, \eqref{lambda-bound-x} and \eqref{inequality-x-lambda-limit}.

\item From the definition of $\LCal_\gamma$, it is clear that for $i=1,\ldots,N-1$,
\begin{displaymath}
g_i(x_i^{k+1}) - A_i^\top\lambda^{k+1} + \gamma A_i^\top \left(\sum\limits_{i=1}^{N-1} A_i x_i^{k+1} + x_N^{k+1} - b\right) \in \partial_{x_i} \LCal_\gamma(w^{k+1}),
\end{displaymath}
and
\begin{displaymath}
\nabla f(x_N^{k+1}) - \lambda^{k+1} + \gamma \left(\sum\limits_{i=1}^{N-1} A_i x_i^{k+1} + x_N^{k+1} - b\right) = \nabla_{x_N} \LCal_\gamma(w^{k+1}),
\end{displaymath}
and
\begin{displaymath}
b - \sum\limits_{i=1}^{N-1} A_i x_i^{k+1} - x_N^{k+1} = \nabla_{\lambda} \LCal_\gamma(w^{k+1}),
\end{displaymath}
where $g_i\in\partial\left(f_i + \mathbf{1}_{\XCal_i}\right)$ for $i=1,2,\ldots,N-1$. Since \eqref{scenario-2-opt-x-1-lambda}, \eqref{scenario-2-opt-x-i-lambda}, and \eqref{scenario-2-opt-x-N-lambda} imply that
\begin{align}
& g_{1}(x_{1}^{k+1})-A_{1}^{\top}\lambda^{k+1} = - \gamma A_{1}^{\top}\left(\sum\limits_{j=2}^{N-1} A_{j}(x_{j}^{k}-x_{j}^{k+1})+(x_{N}^{k}-x_{N}^{k+1})\right), & \label{opt-x1-lambda-1-kl} \\
& g_i(x_{i}^{k+1})-A_{i}^{\top}\lambda^{k+1}= -\gamma A_{i}^{\top}\left(\sum\limits_{j=i+1}^{N-1} A_{j}(x_{j}^{k}-x_{j}^{k+1})+(x_{N}^{k}-x_{N}^{k+1})\right), & \label{opt-xi-lambda-1-kl} \\
& \nabla f_N(x_{N}^{k+1})-\lambda^{k+1} = 0, &\label{opt-xN-lambda-1-kl}
\end{align}
we have
\begin{align}
& R_i^{k+1} = \gamma A_i^\top \left(\sum\limits_{i=1}^{N-1} A_i x_i^{k+1} + x_N^{k+1} - b\right) - \gamma A_{i}^{\top}\left(\sum\limits_{j=i+1}^{N-1} A_{j}(x_{j}^{k}-x_{j}^{k+1})+(x_{N}^{k}-x_{N}^{k+1})\right) \in \partial_{x_i} \LCal_\gamma(w^{k+1}) , & \nonumber \\
& R_N^{k+1} = \gamma \left(\sum\limits_{i=1}^{N-1} A_i x_i^{k+1} + x_N^{k+1} - b\right) = \nabla_{x_N} \LCal_\gamma(w^{k+1}), &\nonumber \\
& R_{\lambda}^{k+1} = b - \sum\limits_{i=1}^{N-1} A_i x_i^{k+1} - x_N^{k+1} = \nabla_{\lambda} \LCal_\gamma(w^{k+1}), &\nonumber
\end{align}
for $i=1,2,\ldots,N-1$. This implies that $\left( R_1^{k+1}, \ldots, R_N^{k+1}, R_\lambda^{k+1}\right)\in\partial\LCal_\gamma(w^{k+1})$.

We now need to estimate the norms of $R_i^{k+1},1\leq i\leq N-1$ and $R_N^k$ and $R_\lambda^k$. It holds true that,
\begin{eqnarray*}
\left\| R_i^{k+1} \right\| & \leq & \gamma\left\| A_i^\top\right\| \left(\sum\limits_{j=i+1}^{N-1} \left\| A_j x_j^{k} - A_j x_j^{k+1}\right\| + \left\| x_N^{k} - x_N^{k+1}\right\| \right) + \gamma\left\| A_i^\top\right\|\left\| \sum\limits_{i=1}^{N-1} A_i x_i^{k+1} + x_N^{k+1} - b\right\|  \\
& \leq & \gamma\left\| A_i^\top\right\|\left(\sum\limits_{j=1}^{N-1} \left\| A_j x_j^{k} - A_j x_j^{k+1}\right\| + \left\| x_N^{k} - x_N^{k+1}\right\| \right) + \left\| A_i^\top\right\|\left\| \lambda^k - \lambda^{k+1}\right\|
\end{eqnarray*}
and
\begin{eqnarray*}
\left\| R_N^{k+1} \right\| \leq \gamma\left\| \sum\limits_{i=1}^{N-1} A_i x_i^{k+1} + x_N^{k+1} - b\right\| = \left\| \lambda^k - \lambda^{k+1}\right\| ,
\end{eqnarray*}
and
\begin{eqnarray*}
\left\| R_\lambda^{k+1} \right\| & = & \left\| \sum\limits_{i=1}^{N-1} A_i x_i^{k+1} + x_N^{k+1}- b\right\| = \frac{1}{\gamma}\left\| \lambda^k - \lambda^{k+1}\right\| .
\end{eqnarray*}
Therefore, we arrive at \eqref{inequality-subgradient-upper-bound}
where $M$ is defined in \eqref{def-M}.
\end{enumerate}
%\end{proof}

\hfill $\Box$

\bigskip

\textbf{Proof of Theorem \ref{thm-l2-convergence}.}
\begin{enumerate}
\item It has been proven in Lemma \ref{LemmaA1} that $\left\{\left(x_1^k, x_2^k, \ldots,x_N^k, \lambda^k\right): k=0,1,\ldots\right\}$ is a bounded sequence. Therefore, we conclude that $\Omega(w^0)$ is non-empty by the Bolzano-Weierstrass Theorem. Let $w^* = \left(x_1^*,\ldots, x_N^*,\lambda^*\right)\in\Omega(w^0)$ be a limit point of $\{w^k = \left(x_1^k,\ldots, x_N^k,\lambda^k\right):k=0,1,\ldots\}$. Then there exists a subsequence $\left\{w^{k_q} = \left(x_1^{k_q},\ldots, x_N^{k_q},\lambda^{k_q}\right):q=0,1,\ldots\right\}$ such that $w^{k_q}\rightarrow w^*$ as $q\rightarrow\infty$. Since $f_i, i=1,\ldots,N-1$, are lower semi-continuous, we obtain that
\begin{eqnarray}\label{lower_bound_f_i}
\liminf\limits_{q\rightarrow\infty} f_i(x_i^{k_q}) \geq f_i(x_i^*), \quad i=1,2,\ldots, N.
\end{eqnarray}
From the iterative step \eqref{scenario-2-update-x-1}-\eqref{scenario-2-update-lambda}, we have for any integer $k$ and any $i=1,\ldots,N-1$,
\begin{displaymath}
x_i^{k+1} := \argmin\limits_{x_i\in\XCal_i} \ \LCal_\gamma(x_1^{k+1},\ldots, x_{i-1}^{k+1}, x_i, x_{i+1}^k,\ldots, x_N^k;\lambda^k).
\end{displaymath}
Letting $x_i = x_i^*$ in the above, we get
\begin{eqnarray*}
\LCal_\gamma(x_1^{k+1},\ldots, x_i^{k+1}, x_{i+1}^k,\ldots, x_N^k;\lambda^k) \leq \LCal_\gamma(x_1^{k+1},\ldots, x_{i-1}^{k+1}, x_i^{*}, x_{i+1}^k,\ldots, x_N^k;\lambda^k),
\end{eqnarray*}
i.e.,
\begin{eqnarray*}
& & f_i(x_i^{k+1}) - \left\langle\lambda^k, A_i x_i^{k+1}\right\rangle + \frac{\gamma}{2}\left\| \sum\limits_{j=1}^i A_j x_j^{k+1} + \sum\limits_{j=i+1}^{N-1} A_j x_j^{k} + x_N^k - b \right\|^2 \\
& \leq &  f_i(x_i^*) - \left\langle\lambda^k, A_i x_i^*\right\rangle + \frac{\gamma}{2}\left\| \sum\limits_{j=1}^{i-1} A_j x_j^{k+1} + A_i x_i^* + \sum\limits_{j=i+1}^{N-1} A_j x_j^{k} + x_N^k - b \right\|^2.
\end{eqnarray*}
Choosing $k=k_q-1$ in the above inequality and letting $q$ go to $+\infty$, we obtain
\begin{eqnarray}\label{upper_bound_f_i}
\limsup\limits_{q\rightarrow +\infty}f_i(x_i^{k_q}) \leq \limsup\limits_{q\rightarrow +\infty} \left( \frac{\gamma}{2}\left\| A_i x_i^{k_q} - A_i x_i^* \right\|^2 - \left\langle\lambda^k, A_i x_i^{k_q}- A_i x_i^*\right\rangle \right) + f_i(x_i^*),
\end{eqnarray}
for $i=1,2,\ldots,N-1$. Here we have used the facts that both the sequence $\{w^k:k=0,1,\ldots\}$ is bounded, and $\gamma$ is finite, and that the distance between two successive iterates tends to zero \eqref{inequality-x-lambda-limit}, and the fact that
\begin{displaymath}
\sum\limits_{j=1}^i A_j x_j^{k+1} + \sum\limits_{j=i+1}^{N-1} A_j x_j^k + x_N^k - b  =  \sum\limits_{j=i+1}^{N-1} \left( A_j x_j^k - A_j x_j^{k+1}\right) + \left( x_N^k - x_N^{k+1}\right) + \frac{1}{\gamma} (\lambda^k - \lambda^{k+1}).
\end{displaymath}
From \eqref{inequality-x-lambda-limit} we also have $x_i^{k_q-1}\rightarrow x_i^*$ as $q\rightarrow\infty$, hence \eqref{upper_bound_f_i} reduces to $$\limsup\limits_{q\rightarrow \infty}f_i(x_i^{k_q})\leq f_i(x_i^*).$$ Therefore, combining with %in view of
\eqref{lower_bound_f_i}, $f_i(x_i^{k_q})$ tends to $f_i(x_i^*)$ as $q\rightarrow \infty$. Therefore, we can conclude that
\begin{eqnarray*}
\lim\limits_{q\rightarrow\infty}\LCal_\gamma(w^{k_q}) & = & \lim\limits_{q\rightarrow\infty}\left( \sum\limits_{i=1}^N f_i(x_i^{k_q}) - \left\langle\lambda^{k_q}, \sum\limits_{i=1}^{N-1} A_i x_i^{k_q} + x_N^{k_q} -b\right\rangle + \frac{\gamma}{2}\left\| \sum\limits_{i=1}^{N-1} A_i x_i^{k_q} + x_N^{k_q} -b \right\|^2\right) \\
& = & \sum\limits_{i=1}^N f_i(x_i^{*}) - \left\langle\lambda^{*}, \sum\limits_{i=1}^{N-1} A_i x_i^{*}+x_N^{*}-b\right\rangle + \frac{\gamma}{2}\left\| \sum\limits_{i=1}^{N-1} A_i x_i^{*}+x_N^{*}-b \right\|^2 \\
& = & \LCal_\gamma(w^{*}).
\end{eqnarray*}
On the other hand, it follows from \eqref{inequality-x-lambda-limit} and \eqref{inequality-subgradient-upper-bound} that
\begin{eqnarray}
\left( R_1^{k+1}, \ldots, R_N^{k+1}, R_\lambda^{k+1}\right) & \in & \partial\LCal_\gamma(w^{k+1}) \\
\left( R_1^{k+1}, \ldots, R_N^{k+1}, R_\lambda^{k+1}\right) & \rightarrow & (0,\ldots,0), \quad k\rightarrow\infty.
\end{eqnarray}
It implies that $(0,\ldots,0)\in\partial\LCal_\gamma(x_1^*,\ldots,x_N^*,\lambda^*)$ due to the closeness of $\partial\LCal_\gamma$. Therefore, $w^* = \left( x_1^*,\ldots,x_N^*,\lambda^*\right)$ is a critical point of $\LCal_\gamma(x_1,\ldots,x_N,\lambda)$.
\item The proof for this assertion directly follows from Lemma 5 and Remark 5 of \cite{Bolte-Sabach-Teboulle-2014}. We omit the proof here for succinctness.
\item We define that $L^*$ is the finite limit of $\LCal_\gamma(x_1^k,\ldots,x_N^k,\lambda^k)$ as $k$ goes to infinity, i.e.,
\begin{displaymath}
L^* = \lim\limits_{k\rightarrow\infty} \LCal_\gamma(x_1^k,\ldots,x_N^k,\lambda^k) .
\end{displaymath}
Take $w^*\in\Omega(w^0)$. There exists a subsequence $w^{k_q}$ converging to $w^*$ as $q$ goes to infinity. Since we have proven that
\begin{eqnarray*}
\lim\limits_{q\rightarrow\infty}\LCal_\gamma(w^{k_q}) = \LCal_\gamma(w^{*}),
\end{eqnarray*}
and $\LCal_\gamma(w^{k})$ is a non-increasing sequence, we conclude that $\LCal_\gamma(w^{*}) = L^*$, hence the restriction of $\LCal_\gamma(x_1,\ldots,x_N,\lambda)$ to $\Omega(w^0)$ equals $L^*$.
\end{enumerate}
\end{document}